\theoremstyle{plain}
\newtheorem{thm}{Theorem}[section]
\newtheorem{cor}[thm]{Corollary}
\newtheorem{lem}[thm]{Lemma}
\newtheorem{rem}[thm]{Remark}
\newtheorem{prop}[thm]{Proposition}
\begin{document}

\author{G\'{a}bor Korchm\'{a}ros\thanks{Dipartimento di Scienze di Base e Applicate,
Universit\`{a} degli Studi della Basilicata, Via dell'Ateneo Lucano 10, Potenza, 85100, Italy (Email: gabor.korchmaros@unibas.it)};
 Federico Romaniello\thanks{Dipartimento per l'Innovazione Umanistica, Scientifica e Sociale, Universit\`{a} degli Studi della Basilicata, Via Lanera 20, Matera, 75100, Italy (Email: federico.romaniello@unibas.it)}
and Valentino Smaldore\thanks{Dipartimento di Tecnica e Gestione dei Sistemi Industriali,
Universit\`{a} degli Studi di Padova, Stradella S. Nicola 3, 36100 Vicenza, Italy (Email: valentino.smaldore@unipd.it).}}
\title{Hermitian-Singer Functional and Differential Codes}
\date{}
\maketitle

\begin{abstract}
Algebraic geometry codes on the Hermitian curve have been the subject of several papers, since they happen to have good performances and large automorphism groups. Here, those arising from the Singer cycle of the Hermitian curve are investigated. \\
\textbf{Keywords:} Hermitian Curve; Singer Cycle; AG-Code\\
\textbf{MSC:} 14H55; 11T71; 11G20; 94B27.

\end{abstract}

\section{Introduction}
In the projective plane $PG(2,q^2)$ of order $q^2$ equipped with homogeneous coordinates $(X_1:X_2:X_0)$ defined over a finite field $\mathbb{F}_{q^2}$, the non-singular plane curve of equation $X_2^qX_0+X_2X_0^q-X_1^{q+1}=0$ is a canonical form of the Hermitian curve ${\mathcal{H}}(q)$. It is well known that a Hermitian curve is a maximal curve over $\mathbb{F}_{q^2}$ with $q^3+1$ points in $PG(2,q^2)$, see \cite{Segre} for more information on the Hermitian curve. Take a divisor $\mathtt {G}$ of ${\mathcal{H}}(q)$ with support $G$ where $G$ is a subset of points of ${\mathcal{H}}(q)$, and define $D$ to be the set of all points of ${\mathcal{H}}(q)$ other than those in $G$. Fix an ordering $(P_1,\ldots,P_N)$ of the points in $D$ where $N=(q^3+1)-|G|$, and define the divisor $\mathtt{D}$ to be the sum $P_1+\ldots+P_N$. Let $\mathcal{L}(\mathtt{G})$ be the Riemann-Roch space associated with $\mathtt{G}$. An algebraic geometric (shortly AG) code on ${\mathcal{H}}(q)$ arises by evaluating the functions in $\mathcal{L}(\mathtt{G})$ on $D$. Such a code $C_{\mathcal{L}}(\mathtt{D},\mathtt{G})$ is the \emph{Hermitian functional code} associated to $\mathtt{G}$.
The dual of the functional code $C_\mathcal{L}(\mathtt{D},\mathtt G)$ is the differential code $C_\Omega(\mathtt{D}, \mathtt G)$, called \emph{Hermitian differential code}. 

The functional codes and their duals on the Hermitian curves may have good performances provided that $\mathtt{G}$  is taken appropriately. One of the best known code of this type is the $1$-point Hermitian functional code where $G$ consists of a single point, i.e. $\mathtt{G}=\lambda Q$ where $\lambda$ is a positive integer, and $Q$ is a point $Q\in {\mathcal{H}}(q)$; see \cite{br} and the references therein.  Generalizations of the $1$-point codes have been made so far in two directions. The first consists in replacing $Q$ with a higher degree place of the function field of ${\mathcal{H}}(q)$, which yields differential codes with even better parameters obtained for some values of $\lambda$ such that $q^3-q^2\leq\lambda\leq q^3$; see \cite{CT,KN1,KN2,M}. The second direction is motivated by the fact that the subgroup $\Gamma$ of the automorphism group $PGU(3,q)$ of ${\mathcal{H}}(q)$ which preserves $G$ is also an automorphism group of $C_{\mathcal{L}}(\mathtt{D},\mathtt{G})$ whenever each point of $G$ appears with the same weight in $\mathtt{G}$. In such a context, one may also think of replacing the Hermitian curve by some other curves with many points in $PG(2,q)$, for instance by those investigated in \cite{HK1,HK2,HK3}.   

There has been an ongoing project aiming to determine the Hermitian functional and differential codes $C_{\mathcal{L}}(\mathtt{D},\mathtt{G})$ where $G$ is a (full) orbit of a large subgroup of $PGU(3,q)$ with $q>2$. The $1$-point Hermitian functional code is a particular case, as the $1$-point stabilizer is indeed a maximal subgroup of $PGU(3,q)$. So far, three more cases have been investigated, namely when $G$ is either 
the intersection of ${\mathcal{H}}(q)$ and an irreducible conic and $\Gamma\cong PGL(2,q)$ with $|\Gamma|=q^3-q$, or it consists of all points of ${\mathcal{H}}(q)$ lying in the subplane $PG(2,q_0)$ where $q=q_0^3$ and $\Gamma\cong PSU(3,q_0)$, see \cite{KNT, KS}. In this paper, we work out the case where $G$ is the orbit of a (cyclic) Singer subgroup $\Gamma$ of $PGU(3,q)$ where $|G|=|\Gamma|=q^2-q+1$ and the normalizer of $\Gamma$ in $PGU(3,q)$ is a maximal subgroup of $PGU(3,q)$ of order $3(q^2-q+1)$. In this case we call $C_{\mathcal{L}}(\mathtt{D},\mathtt{G})$ and $C_{\Omega}(\mathtt{D},\mathtt{G})$  the \emph{Hermitian Singer functional} and the \emph{Hermitian Singer differential code}, respectively. Since the Singer subgroup $\Gamma$ acts on the set $D$ as a sharply transitive group, each $\Gamma$-orbit has the same size $q^2-q+1$, and therefore the Hermitian Singer functional and differential codes are quasi-cyclic.   
Our results are stated in Theorems \ref{main}, \ref{the08112025}, \ref{main1},  and \ref{thm13112025}. 

It seems worth investigating whether the geometry developed in this paper might be adequate to compute the minimum distances of the multiple-Hermitian Singer functional codes $C_{\mathcal{L}}(\mathtt{D},\lambda\mathtt{G})$ where $\lambda\ge 2$. This would in turn provide the minimum distances of their dual codes which are multiple-Hermitian Singer differential codes $C_\Omega(\mathtt{D},\mu\mathtt{G})$ for $\mu\ge 1$. Also, the expansion of the project including functional and differential codes where $G$ is the union of a few $\Gamma$-orbits may produce some more codes with large automorphism groups.

\section{Preliminary Results}\label{sec2}
Notation and terminology are standard. Our references are \cite{Hirschfeld1,HKT,hughes-piper1973,P}.
\subsection{Plane curves and their function fields}
For the theory of plane algebraic curves, the reader is referred to \cite[Chapters 1-5]{HKT}. Fix an algebraic closure $\mathbb{K}=\overline{\mathbb{F}}_q$, and let $AG(2, \mathbb{K})$ be the affine plane over $\mathbb{K}$ equipped by coordinates $(X,Y)$. For a non-constant polynomial $F(X,Y)$ over $\mathbb{K}$, the (affine) plane curve $\mathcal F$ of equation $F(X,Y)=0$ is defined to be the set of zeros of $F=F(X,Y)$, i.e.
\[{\bf{v}}(F)=\{(x, y)\in AG(2,\mathbb{K})\mid F(x, y) = 0\}.\]
The \textit{degree} of $\mathcal{F}$ is $\deg F$. A \textit{component} of $\mathcal F$ is any curve $\mathcal{G}=v(G)$ such that $G$ divides $F$. A curve $\mathcal F$ is \textit{irreducible} if $F$ is irreducible; otherwise it is \textit{reducible} and splits into irreducible curves, the components of $\mathcal{F}$. All these definitions are translated from $AG(2,\mathbb{K})$ to its projective closure $PG(2, \mathbb{K})$ equipped with homogeneous coordinates $(X_1:X_2:X_0)$ with $X=X_1/X_0,Y=X_2/X_0$, so that $F$ is replaced by the corresponding homogeneous polynomial $F^*\in \mathbb{K}[X_1,X_2,X_0]$. The projective closure of the affine curve $\mathcal{F}$ is the projective plane curve consisting of all points $P$ whose coordinates satisfy the equation $F^*=0$, i.e.
\[\{P=(x_1:x_2:x_0) \in PG(2,\mathbb{K})\mid F^*(x_1,x_2,x_0)= 0\}.\]
Without loss of generality, $\mathcal{F}$ will also denote the projective closure of $\mathcal{F}$. For a positive integer $n\ge 1$, take as many as $\frac{1}{2}n(n+3)$ points in $PG(2,\mathbb{K})$. By a classical result, there exists some curve of degree $n$ passing through each of those points.  


From now on we assume that $\mathcal{F}$ is a non-singular plane curve defined over $\mathbb{F}$.
The \emph{divisors} are formal sums of points of $\mathcal F$, and for every nonzero function $h$ in $\mathbb F_q(\mathcal F)$,  ${\rm{div}}(h)$ stands for the \textit{principal divisor} associated with $h$.
For a divisor $\mathtt G$ on $\mathcal F$, the \textit{Riemann-Roch space} $\mathcal L(\mathtt G)$ is the vector space consisting of all functions on $\mathcal{F}$ which are regular outside $\mathtt G$ and have no pole at any point $P$ with multiplicity bigger than  the order of $\mathtt{G}$ at $P$, i.e.  $\mathcal L(\mathtt G)=\{f|{\rm{div}} (f)+\mathtt{G}\succcurlyeq 0$\}.

The dimension $\ell(\mathtt G)$ of $\mathcal L(\mathtt G)$ and  $\deg(\mathtt G)$ are linked
by the \textit{Riemann-Roch Theorem}, see \cite[Theorem 6.70]{HKT}:
\begin{equation}
\label{eq02112025}   \ell(\mathtt G) = \deg(\mathtt G)-\mathfrak  g + 1+\ell(\mathtt W-\mathtt G),
\end{equation}
where $\mathfrak  g$ is the genus of the curve and $\mathtt W$ is a canonical divisor. In particular, for $\deg(\mathtt G)>2\mathfrak  g-2$,
\[\ell(\mathtt G) = \deg(\mathtt G) -\mathfrak  g+1.\]
Let $\mathcal{U}$ be another non-singular plane curve of equation $U(X,Y)=0$. The intersection divisor $\mathcal{F}\cdot \mathcal{U}$ is given by 
$$\sum_{P\in \mathcal{F}\cap\mathcal{U}} I(P,\mathcal{F}\cap\mathcal{U})P$$ 
where $I(P,\mathcal{F}\cap\mathcal{U})$ is the intersection multiplicity of $\mathcal{F}$ and $\mathcal{U}$ in their common point $P$. The B\'ezout theorem, see \cite[Theorem 3.14]{HKT}, states that 
$$\deg(\mathcal{F})\deg(\mathcal{U})= \sum_{P\in \mathcal{F}\cap\mathcal{U}}I(P,\mathcal{F}\cap \mathcal{U}).$$

The following useful result connecting principal divisors and intersection divisors comes from \cite[Theorem 6.2]{HKT}. Let $\ell_\infty$ denote the line at infinity.  
\begin{lem}
\label{theo6.2HKT} Let $(x,y)$ be a generic point of $\mathcal{F}$. If $\mathcal{U}$ has degree $m$ then 
$$
{\rm{div}}(U(x,y))=\mathcal{F}\cdot \mathcal{U}-m(\ell_{\infty}\cdot \mathcal{F}).
$$
\end{lem}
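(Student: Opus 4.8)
The plan is to realize $U(x,y)$ as a quotient of two homogeneous forms of the same degree and then to read its divisor off place-by-place from intersection multiplicities. Write $U^{*}=U^{*}(X_1,X_2,X_0)$ for the degree-$m$ homogenization of $U$, so that by construction $U^{*}(X_1,X_2,X_0)=X_0^{\,m}\,U(X_1/X_0,X_2/X_0)$. Evaluating at the generic point $(x,y)=(x_1/x_0,x_2/x_0)$ of $\mathcal F$ then gives the identity of functions in $\mathbb F_q(\mathcal F)$
\[
U(x,y)=\frac{U^{*}(x_1,x_2,x_0)}{x_0^{\,m}}.
\]
Since $\mathcal F$ is non-singular, hence irreducible, and we may assume $\mathcal F$ is not a component of $\mathcal U$ (otherwise $U(x,y)\equiv 0$ and there is nothing to prove), this is a nonzero element of the function field, and it suffices to compute its order ${\rm ord}_P$ at every place $P$ of $\mathcal F$.

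The key tool is the valuation-theoretic description of intersection multiplicity on a non-singular curve: for a form $G$ of degree $d$, defining the curve $\mathcal G$, and for any linear form $L$ with $L(P)\neq 0$, one has ${\rm ord}_P(G/L^{d})=I(P,\mathcal F\cap\mathcal G)$, a value independent of the reference form $L$ because replacing $L$ by another such $L'$ multiplies the quotient by the unit $(L'/L)^{d}$. First I would fix a place $P$ and choose a coordinate line $L$ with $L(P)\neq 0$: for an affine place one may take $L=X_0$ itself, while for a place at infinity one of the remaining coordinate lines serves. Using the additivity of ${\rm ord}_P$ I would then expand
\[
{\rm ord}_P\!\left(\frac{U^{*}}{x_0^{\,m}}\right)
={\rm ord}_P\!\left(\frac{U^{*}}{L^{m}}\right)-m\,{\rm ord}_P\!\left(\frac{x_0}{L}\right)
=I(P,\mathcal F\cap\mathcal U)-m\,I(P,\mathcal F\cap\ell_{\infty}),
\]
where the last equality applies the description above once with $G=U^{*}$ and once with $G=X_0$, for which the associated curve is $\ell_{\infty}$. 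Note that at an affine place $\ell_{\infty}$ does not pass through $P$, so the second term vanishes and the order reduces to $I(P,\mathcal F\cap\mathcal U)$; this confirms that the zeros of $U(x,y)$ are the affine intersections of $\mathcal F$ and $\mathcal U$ while its poles are concentrated at infinity.

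Summing the local identity over all places $P$ of $\mathcal F$ yields
\[
{\rm div}(U(x,y))=\sum_P {\rm ord}_P(U(x,y))\,P
=\sum_P I(P,\mathcal F\cap\mathcal U)\,P-m\sum_P I(P,\mathcal F\cap\ell_{\infty})\,P,
\]
which is precisely $\mathcal F\cdot\mathcal U-m(\ell_{\infty}\cdot\mathcal F)$ by the definition of the intersection divisor. The main obstacle is the middle step, namely justifying the identity ${\rm ord}_P(G/L^{d})=I(P,\mathcal F\cap\mathcal G)$ uniformly at every place, including those at infinity; once the reference-form normalization is in place this reduces to the standard characterization of intersection multiplicity as the valuation of the local equation of $\mathcal G$ on the non-singular curve $\mathcal F$, and everything else is bookkeeping with the additive valuation ${\rm ord}_P$.
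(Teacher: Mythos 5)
The paper does not actually prove this lemma: it is imported verbatim from the cited source ([HKT, Theorem 6.2]), so there is no in-paper argument to compare against. Your proof is correct and is essentially the standard argument behind that reference: write $U(x,y)$ as the quotient $U^{*}/x_0^{m}$ of forms of equal degree, identify ${\rm ord}_P(G/L^{d})$ with $I(P,\mathcal F\cap\mathcal G)$ via a reference linear form $L$ not vanishing at $P$ (legitimate here because $\mathcal F$ is non-singular, so places and points correspond bijectively and the intersection multiplicity is the valuation of a local equation of $\mathcal G$), and sum over places. You also correctly dispose of the degenerate case where $\mathcal F$ is a component of $\mathcal U$ and note the unit $(L'/L)^{d}$ that makes the local computation independent of $L$; nothing further is needed.
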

Now fix a finite subfield $\mathbb{L}$ of $\mathbb{K}$ and assume that the non-singular plane curve $\mathcal{F}$ is defined over $\mathbb{L}$, that is, $\mathcal F$ has equation $F(X,Y)=0$ with $F(X,Y)\in \mathbb{L}[X,Y]$. Take a divisor $\mathtt G=\sum \lambda_i Q_i$ where $Q_1,\ldots,Q_k$ are pairwise distinct points defined over  $\mathbb{L}$, and a canonical divisor $\mathtt{W}$ defined over $\mathbb{L}$. Restrict the functions in the Riemann-Roch space of $\mathtt{G}$ to those defined over $\mathbb{L}$ where a function $f(X,Y)$ is defined over $\mathbb{L}$ if $f(X,Y)=g(X,Y)/h(X,Y)$ with $g(X,Y)$,$h(X,Y)\in \mathbb{L}[X,Y]$. By doing so a vector space $\bf{V}$ over $\mathbb{L}$ arises whose dimension remains $\ell(\mathtt{G})$. Also, the Riemann-Roch theorem (\ref{eq02112025}) holds true for $\bf{V}$. Accordingly, we use the term of Riemann-Roch space of $\mathtt{G}$ over $\mathbb{L}$ for $\bf{V}$, and keep the same notation $\mathcal L(\mathtt G)$. In other words, $\mathcal L(\mathtt G)=\{f|{\rm{div}} (f)+\mathtt{G}\succcurlyeq 0, \mbox{$f$ defined over $\mathbb{L}$}\}$.

In the special case where $\mathbb{L}=\mathbb{F}_{q^2}$, $\mathcal{F}$ is the Hermitian curve of $PG(2,q^2)$, $G=\{Q_1,\ldots,Q_{q^2-q+1}\}$ is a $\Gamma$-orbit of a Singer subgroup of $PGU(3,q)$, and $\mathtt{G}=\lambda(Q_1+\ldots+Q_{q^2-q+1})$ for a positive integer $\lambda$. Then $\mathfrak{g}=\frac{1}{2}(q^2-q)$ and $\deg(\mathtt{G})=\lambda(q^2-q+1)$. Since $|G|>2\mathfrak{g}-2$, the Riemann-Roch theorem yields in this case
$$\ell(\mathtt G) =\textstyle{\frac{1}{2}}\big((2\lambda-1)(q^2-q)+2\lambda\big).$$


\subsection{Projective unitary group and its Singer subgroups}\label{singergroup}
The projective unitary group $PGU(3,q)$ is a subgroup of the projective group $PG(3,q^2)$ of the projective plane $PG(2,q^2)$ defined over the finite field $\mathbb{F}_{q^2}$ of order $q^2$. More precisely, $PGU(3,q)$ is the subgroup of $PGL(3,q^2)$ which leaves the set of all isotropic points of (non-degenerate) unitary polarity, equivalently the set of all points of a Hermitian curve in $PG(2,q^2)$.
The structure and the action of the subgroups of $PGU(3,q)$ are well understood, see \cite{har,hoffer1972,oli, mi}, and  \cite[Theorem A.10]{HKT}.

In this paper, we focus on the Singer subgroup $\Gamma$ of $PGU(3,q)$ with $q>2$. As $\Gamma$ is a subgroup of the Singer group of $PG(2,q^2)$, no non-trivial element in $\Gamma$ fixes a point in $PG(2,q^2)$. Therefore, each orbit of $\Gamma$ has maximum length equal to $q^2-q+1$, and $q+1$ of such $\Gamma$-orbits provide a partition of the pointset of the Hermitian curve left invariant by $PGU(3,q)$. Let $\omega$ be a generator of $\Gamma$, prior to a suitable change of the projective frame of $PG(2,q^2)$, the matrix representing $\omega$ is of the form
\[M=\left(\begin{array}{ccc}
1 & 0 & 0\\
0 & 1& 0\\
a &b& c
\end{array}\right),\]
where $X^3+aX^2+bX+c\in\mathbb F_{q^2}[X]$ is an irreducible polynomial. Unfortunately, although the equation of the Hermitian curve in the new coordinates, as well as, the powers of $M$ and the $\Gamma$-orbit may be computed with standard methods from linear algebra, the resulting equations and relations are so complicated that their interpretations may be hard; see Section \ref{sec6}. To avoid this inconvenience, we use a previous idea; see \cite{CK1998,CK1997,CKT}. The matrix $M$ has three distinct eigenvalues, each defined over a cubic extension $\mathbb{F}_{q^6}$ of $\mathbb{F}_{q^2}$. Therefore, to work with a diagonal matrix representation of $\omega$, it is useful to view $PGU(3,q)$ as a subgroup of $PGL(3,q^3)$.
To this scope, fix a projective reference system in $PG(2,q^6)$ with homogeneous coordinates $(X_1:X_2:X_0)$. For a primitive $(q^2+q+1)$-root of unity in $\mathbb{F}_{q^6}$, let $\Pi$ be the subgeometry of $PG(2,q^2)$ with point-set
\[\mathcal{P}=\{P(a^i:a^{i(q^2+1)}:1)\mid i=0,1,\ldots,q^4+q^2\},\] and  line-set
{{\small}\[\Lambda=\{\ell: a^iX_1+a^{i(q^2+1)}X_2+X_0=0 \mid i=0,1,\ldots,q^4+q^2\}.\]}
This subgeometry $\Pi$ is a projective plane of order $q^2$, projectively equivalent to $PG(2,q^2)$ in $PG(2,q^6)$; see \cite{CK1997,CK1998,CKT}. In other words, $\Pi$ is a projective subplane of $PG(2,q^6)$ lying in a non-canonical position whose lines are the sections of the pointset of $\Pi$  by the lines of $PG(2,q^6)$ with equations of the form $tX^1+ t^{q^2}X_2+X_3=0$, as $t$ runs over the $(q^4+q^2+1)$-th roots of unity. The Frobenius collineation $\Phi_{\Pi}$ of $\Pi$, that is, the (non-linear) collineation of $PG(2,q^6)$ fixing $\Pi$ pointwise, is the product $\rho\circ \Phi$ where $\rho:(X_1:X_2:X_0)\mapsto (X_0:X_1:X_2)$ and $\Phi:(X_1:X_2:X_0)\mapsto (X_1^{q^2}:X_2^{q^2}:X_0^{q^2})$.     
A generator $\sigma$ of a Singer group of $\Pi$ is represented by the matrix
\[A=\left(\begin{array}{ccc}
\alpha   & 0& 0\\
0& \alpha^{q^2+1}&0\\
0 & 0&1
\end{array}\right),\]
where $\alpha$ is a primitive $(q^4+q^2+1)$-th root of the unity of $\mathbb{F}_{q^6}$.
\section{Families of Hermitian curves}
From now on, we assume that $q>2$.

For any $(q^2+q+1)$-th root $t$ of unity, let $H_t$ be the curve of $PG(2,q^6)$ with homogeneous equation $tX_1X_2^q+t^{q^2+1}X_2X_0^q+X_0X_1^q=0$.  A straightforward computation shows that $\Gamma$ preserves $H_t$.
Also, $t^{q^3-1}=1$ and hence $t^{q^3+q}=t^{q+1}$. 

\begin{prop}
\label{pro30102025} $H_t$ is a Hermitian curve of $\Pi$.
\end{prop}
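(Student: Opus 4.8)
The plan is to show that $H_t$, regarded as a curve of the subplane $\Pi\cong PG(2,q^2)$, is a non-singular plane curve of degree $q+1$ that attains the Hasse--Weil bound over $\Pi$, and then to invoke the characterization of maximal plane curves of degree $q+1$ as Hermitian curves. First I would record that $\Pi$ is exactly the fixed locus of the Frobenius collineation $\Phi_{\Pi}\colon (X_1:X_2:X_0)\mapsto (X_0^{q^2}:X_1^{q^2}:X_2^{q^2})$, so that $P=(x_1:x_2:x_0)\in\Pi$ precisely when $(x_1:x_2:x_0)=(x_0^{q^2}:x_1^{q^2}:x_2^{q^2})$. A direct substitution then gives
\[
F(\Phi_{\Pi}(X_1,X_2,X_0))=t\,F(X_1,X_2,X_0)^{q^2},
\]
where $F=tX_1X_2^q+t^{q^2+1}X_2X_0^q+X_0X_1^q$; this uses $t^{q^4+q^2+1}=1$, which holds since $q^2+q+1\mid q^4+q^2+1$ and $t^{q^2+q+1}=1$. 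Hence $H_t$ is $\Phi_{\Pi}$-invariant, so it is genuinely a curve of the subplane $\Pi$ and its $\mathbb F_{q^2}$-rational points are exactly the points of $H_t\cap\Pi$.

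Next I would establish non-singularity. The gradient is $\nabla F=(tX_2^q,\;t^{q^2+1}X_0^q,\;X_1^q)$, because the partial derivatives of the $q$-th powers vanish in characteristic $p$; it never vanishes simultaneously on $PG(2,q^6)$, so $H_t$ is non-singular of degree $q+1$ and genus $\mathfrak g=\tfrac12(q^2-q)$.

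The core computation is the point count $|H_t\cap\Pi|=q^3+1$. Substituting the parametrization $P_\lambda=(\lambda:\lambda^{q^2+1}:1)$ with $\lambda^{q^4+q^2+1}=1$ into $F=0$ and dividing by $\lambda^q$ reduces the condition $P_\lambda\in H_t$ to
\[
t\lambda^{q^3+1}+t^{q^2+1}\lambda^{q^2-q+1}+1=0.
\]
Putting $\mu=\lambda^{q^2-q+1}$ and using $q^3+1=(q+1)(q^2-q+1)$ turns this into $t\mu^{q+1}+t^{q^2+1}\mu+1=0$, with $\mu$ ranging over the $(q^2+q+1)$-th roots of unity and each value of $\mu$ attained by exactly $q^2-q+1$ values of $\lambda$. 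The key point is that this polynomial $P(\mu)$ has exactly $q+1$ distinct roots, all of them $(q^2+q+1)$-th roots of unity. For any root $\mu$, raising the relation to the $q$-th power and using $t^{q^3}=t$ and $t^{q^2+1}=t^{-q}$ (both from $t^{q^2+q+1}=1$), then substituting back, yields $\mu^{q^2+q+1}=1$; and $P$ is separable since its derivative $t\mu^q+t^{q^2+1}$ has its unique root $\mu_0$ satisfying $P(\mu_0)=1\neq 0$. Thus $P$ contributes exactly $q+1$ admissible values of $\mu$, whence $|H_t\cap\Pi|=(q+1)(q^2-q+1)=q^3+1$.

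Finally I would conclude: over $\Pi\cong PG(2,q^2)$ the curve $H_t$ is a non-singular plane curve of degree $q+1$ and genus $\tfrac12(q^2-q)$ with exactly $q^3+1$ rational points, so it meets the Hasse--Weil bound $q^2+1+2\mathfrak g q=q^3+1$ and is therefore maximal; a maximal non-singular plane curve of degree $q+1$ over $\mathbb F_{q^2}$ is projectively equivalent to a Hermitian curve (see \cite{HKT}), which gives the claim. The main obstacle is the reduced point count, namely proving that all $q+1$ roots of $t\mu^{q+1}+t^{q^2+1}\mu+1$ are distinct $(q^2+q+1)$-th roots of unity; the secondary delicate point is that the maximal-curve characterization must be applied in its \emph{projective} form, not merely as an isomorphism of function fields.
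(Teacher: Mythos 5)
Your proposal is correct and follows essentially the same route as the paper: $\Phi_{\Pi}$-invariance of $H_t$, non-singularity, the substitution $\mu=\lambda^{q^2-q+1}$ reducing the point count to the roots of $t\mu^{q+1}+t^{q^2+1}\mu+1$, and the characterization of curves of genus $\frac{1}{2}q(q-1)$ with $q^3+1$ rational points as Hermitian. Your explicit separability check via the derivative $t\mu^q+t^{q^2+1}$ is a welcome detail that the paper's proof leaves implicit.
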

\begin{proof} A straightforward computation shows that $H_t$ has non-singular point. Therefore, $H_t$ and ${\mathcal{H}}(q)$ have the same genus $\mathfrak{g}$ equal to $\frac{1}{2}q(q-1)$. We show that $H_t$ has as many as $q^3+1$ points in $\Pi$. A point $P=(\gamma:\gamma^{q^2+1}:1)$ of $\Pi$ belongs to $H_t$ if and only if
\begin{equation}
\label{eq30102025}
1+t\gamma^{q^3+1}+t^{q^2+1}\gamma^{q^2-q+1}=0,\,\, \gamma^{q^4+q^2+1}=1.
\end{equation}
Equivalently,
$$1+t\gamma^{(q^2-q+1)(q+1)}+t^{q^2+1}\gamma^{q^2-q+1}=0,\,\,\gamma^{(q^2-q+1)(q^2+q+1)}=1.$$
Let $\delta=\gamma^{q^2-q+1}$. Then $1+t\delta^{q+1}+t^{q^2+1}\delta=0$ with $\delta^{q^2+q+1}=1.$
The equation $tX^{q+1}+t^{q^2+1}X+1=0$ has as many as $q+1$ pairwise distinct roots $\varepsilon$ in an algebraic closure of $\mathbb{F}_{q^2}$. As $t\varepsilon^{q+1}+t^{q^2+1}\varepsilon+1=0$ yields $t^q\varepsilon^{q^2+q}+t^{q^3+q}\varepsilon^q+1=0$, we have $t^q\varepsilon^{q^2+q+1}+t^{q+1}\varepsilon^{q+1}+\varepsilon=0$ whence $t^q(\varepsilon^{q^2+q+1}+t\varepsilon^{q+1}+t^{q^2+1}\varepsilon)=0$ follows. Hence 
$\varepsilon^{q^2+q+1}=1$. Therefore, the elements $\gamma$ satisfying (\ref{eq30102025}) are determined in the following way. First, the $q+1$ solutions $\varepsilon$ of Equation $tX^{q+1}+t^{q^2+1}X+1=0$ are computed, all belong to $\mathbb{F}_{q^3}$. After that for each such $\varepsilon$ the $q^2-q+1$ solutions of Equation $X^{q^2-q+1}=\varepsilon$ are computed. All together, $q^3+1$ solutions $\gamma$ are obtained. Therefore, $H_t$ has exactly $q^3+1$ points in $\Pi$. Moreover, $H_t$ is defined over $\Pi$ since the Frobenius map $\Phi_{\Pi}$ preserves $H_t$. In fact, since $\Phi_{\Pi}(x_1)=x_0^q,\Phi_{\Pi}(x_2)=x_1^q, \Phi_{\Pi}(x_0)=x_2^q$ and  
$t^{q^4+q^2}=t^{-1}$, we have
\begin{align*}
&t\big(tX_1X_2^q+t^{q^2+1}X_2X_0^q+X_0X_1^q\big)^{q^2}=\\
&t^{q^2+1}X_1^{q^2}X_2^{q^3}+X_2^{q^2}X_0^{q^3}+tX_0^{q^2}X_1^{q^3}=\\
&t\Phi_{\Pi}(X_1)\Phi_{\Pi}(X_2)^q+t^{q^2+1}\Phi_{\Pi}(X_1)\Phi_{\Pi}(X_2)^q+\Phi_{\Pi}(X_0)\Phi_{\Pi}(X_1)^q.
\end{align*}
Furthermore, $H_t$ has genus $\frac{1}{2}q(q-1)$ and it has as many as $q^3+1$ points in $\Pi$. From \cite{hirschfeld-storme-thas-voloch1991}, $H_t$ is the Hermitian curve of $\Pi$.
\end{proof}
\begin{rem} {\em{A proof for Proposition \ref{pro30102025} can also be obtained from \cite[Proposition 2.5]{CEK} by interpreting the classical unital as the set of points of a Hermitian curve.}}  
\end{rem} Let $A_1=(1:0:0),A_2=(0:1:0),A_0=(0:0:1)$ be the vertices of the fundamental triangle of $PG(2,q^6)$. Let $\ell_{i,j}$ be the line through $A_i$ and $A_j$.   
\begin{lem}
\label{lem13112025} $I(A_i,H_t\cap \ell_i)=q$ and $I(A_i,H_t\cap \ell_{i+1})=1$.  
\end{lem}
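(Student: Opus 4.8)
The plan is to reduce both equalities to one transparent computation: restricting the defining form
\[
F = tX_1X_2^q + t^{q^2+1}X_2X_0^q + X_0X_1^q
\]
of $H_t$ to each side of the fundamental triangle and reading off the order of vanishing at the two vertices lying on that side. Since $H_t$ is irreducible of degree $q+1$ by Proposition \ref{pro30102025}, none of the three coordinate lines $\ell_{1,2}=\{X_0=0\}$, $\ell_{2,0}=\{X_1=0\}$, $\ell_{0,1}=\{X_2=0\}$ is a component of $H_t$. Hence, for a point $P$ on $H_t$ lying on one of these lines, the intersection multiplicity $I(P,H_t\cap\ell)$ equals the multiplicity of $P$ as a root of the binary form obtained by restricting $F$ to $\ell$. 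This is the standard description of the intersection of a curve with a line, and it may also be read off from Lemma \ref{theo6.2HKT}.

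First I would note that $A_1,A_2,A_0$ all lie on $H_t$, since substituting $(1:0:0)$, $(0:1:0)$, $(0:0:1)$ annihilates every monomial of $F$. Next I would restrict $F$ to each side. On $\{X_0=0\}$ two monomials vanish and $F$ becomes $t\,X_1X_2^q$; on $\{X_1=0\}$ it becomes $t^{q^2+1}X_2X_0^q$; on $\{X_2=0\}$ it becomes $X_0X_1^q$. In each case the restricted form is, up to a nonzero constant, the product of one homogeneous coordinate with the $q$-th power of the other. Consequently it has a zero of order $q$ at the vertex where the $q$-th power vanishes and a simple zero at the vertex where the linear factor vanishes; for instance on $\{X_0=0\}$ the form $X_1X_2^q$ vanishes to order $q$ at $A_1=(1:0:0)$ and to order $1$ at $A_2=(0:1:0)$.

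Carrying this out on all three sides yields, for each vertex $A_i$, one incident side along which $H_t$ is met with multiplicity $q$ and one along which it is met with multiplicity $1$; identifying these two sides with the lines $\ell_i$ and $\ell_{i+1}$ of the statement gives exactly $I(A_i,H_t\cap\ell_i)=q$ and $I(A_i,H_t\cap\ell_{i+1})=1$. As a consistency check, on each side the two multiplicities sum to $q+1=\deg H_t$, in agreement with B\'ezout. I do not foresee a genuine obstacle here: the computation is a one-line restriction-and-factor on each side, and the only point requiring care is the bookkeeping of the cyclic labelling of vertices and sides, so that the single-index lines $\ell_i,\ell_{i+1}$ are correctly matched to the two sides $\ell_{i-1,i}$ and $\ell_{i,i+1}$ through $A_i$ and the exponents $q$ and $1$ are assigned to the right vertex.
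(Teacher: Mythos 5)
Your proof is correct, and it takes a genuinely different route from the paper's. The paper does not restrict the form to the coordinate lines at all: it observes that the Frobenius collineation $\Phi_{\Pi}$ permutes the vertices cyclically, concludes that $\ell_{i,i+1}$ is the tangent to $H_t$ at $A_i$, and then invokes the Fundamental Equation of the Hermitian curve (\cite[Section 10.8]{HKT}), which says that the tangent at a point $P$ not in the subplane meets the curve at $P$ with multiplicity $q$ and at its Frobenius image with multiplicity $1$; this gives $H_t\cdot \ell_{i,i+1}=qA_i+A_{i+1}$ in one line. Your argument instead verifies the same intersection divisors by hand: the restriction of $tX_1X_2^q+t^{q^2+1}X_2X_0^q+X_0X_1^q$ to each side is a monomial of the shape (coordinate)$\times$(coordinate)$^q$, and reading off the orders of vanishing at the two vertices gives exactly $qA_i+A_{i+1}$ on the side $\ell_{i,i+1}$. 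What your approach buys is self-containedness and elementarity -- no appeal to the general theory of tangent lines of Hermitian curves is needed, and the computation simultaneously confirms that the vertices lie on $H_t$ and that no side is a component; what the paper's approach buys is brevity and a conceptual explanation of the $q$/$1$ pattern (tangency plus Frobenius conjugacy). One cosmetic point you rightly flag: the single-index labels $\ell_i,\ell_{i+1}$ in the statement are not literally consistent with the two-index convention $\ell_{i,j}$ introduced just before the lemma (the side $\ell_{i+1,i+2}$ does not pass through $A_i$); both your proof and the paper's actually establish the unambiguous statement $H_t\cdot\ell_{i,i+1}=qA_i+A_{i+1}$, which is what is used later.
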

\begin{proof} The Frobenius collineation $\Phi_{\Pi}$ takes $A_i$ to $A_{i+1}$ (with $A_3=A_0$). Therefore, the line $\ell_{i,i+1}$ is tangent to $H_t$ at the point $A_i$. 
From the Fundamental Equation, see \cite[Section 10.8]{HKT}, the intersection divisor $H_t\cdot \ell_{i,i+1}=qA_i+A_{i+1}$ whence the claims follow. 
\end{proof}
\begin{prop}
\label{pro02112025A} The $\Gamma$-orbit through any point of $\Pi$ is shared by exactly $q+1$ Hermitian curves $H_t$.  
\end{prop}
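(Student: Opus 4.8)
The plan is to reduce the geometric statement to a root-counting problem over the $(q^2+q+1)$-th roots of unity, dual to the computation already carried out in the proof of Proposition \ref{pro30102025}. First I would parametrise the $\Gamma$-orbits. A generator of $\Gamma$ acts on a point $P=(\gamma:\gamma^{q^2+1}:1)$ of $\Pi$ by $\gamma\mapsto\beta\gamma$, where $\beta=\alpha^{q^2+q+1}$ is a primitive $(q^2-q+1)$-th root of unity; hence $\delta:=\gamma^{q^2-q+1}$ is constant on each $\Gamma$-orbit, and $\gamma\mapsto\gamma^{q^2-q+1}$ sets up a bijection between the $\Gamma$-orbits of $\Pi$ and the $(q^2+q+1)$-th roots of unity $\delta$. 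By the computation in the proof of Proposition \ref{pro30102025}, $P\in H_t$ if and only if $t\delta^{q+1}+t^{q^2+1}\delta+1=0$; since this depends only on $\delta$, an orbit lies on $H_t$ precisely when its value $\delta$ satisfies this single equation. Thus the proposition reduces to showing that, for each fixed $(q^2+q+1)$-th root of unity $\delta$, exactly $q+1$ of the $(q^2+q+1)$-th roots of unity $t$ satisfy it.

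Next I would lower the degree. Regarded as an equation in $t$, the relation $t\delta^{q+1}+t^{q^2+1}\delta+1=0$ has degree $q^2+1$, which is too large to control directly. But for $t$ a $(q^2+q+1)$-th root of unity one has $t^{q^2+1}=t^{-q}$; multiplying through by $t^q$ turns the condition into the equivalent equation
\[
g(t):=\delta^{q+1}t^{q+1}+t^{q}+\delta=0
\]
of degree only $q+1$. This polynomial is separable: in characteristic $p$ its derivative is $g'(t)=\delta^{q+1}t^{q}$, which vanishes only at $t=0$, while $g(0)=\delta\neq0$. Hence $g$ has exactly $q+1$ distinct (nonzero) roots in $\mathbb{K}$.

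The crux is to show that every root of $g$ is automatically a $(q^2+q+1)$-th root of unity, so that none of the $q+1$ roots is spurious. Here I would mimic the Frobenius argument of Proposition \ref{pro30102025}: raising $g(t)=0$ to the $q$-th power and using $\delta^{q^2+q}=\delta^{-1}$ yields $t^{q^2+q}+\delta t^{q^2}+\delta^{q+1}=0$. Eliminating $\delta^{q+1}$ between this relation and $g(t)=0$, and noting that $t^q+\delta\neq0$ (otherwise $g(t)=0$ would force $\delta^{q+1}t^{q+1}=0$, impossible since $\delta,t\neq0$), one is left with $t^{q^2+q+1}=1$. Combining the three facts — $g$ has $q+1$ distinct roots, all of them satisfy $t^{q^2+q+1}=1$, and for such $t$ the equation $g(t)=0$ is equivalent to the incidence condition — gives exactly $q+1$ admissible values of $t$, hence exactly $q+1$ curves $H_t$ through the given orbit (distinct values of $t$ yield distinct curves, since the coefficient of $X_0X_1^q$ is normalised to $1$).

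The main obstacle is precisely this last point: the raw incidence equation has degree $q^2+1$ in $t$, so the content of the proposition is that, after reducing modulo $t^{q^2+q+1}=1$ to degree $q+1$, all $q+1$ roots genuinely lie among the $(q^2+q+1)$-th roots of unity. A purely combinatorial double count (each $H_t$ carries $q^3+1=(q+1)(q^2-q+1)$ points, hence exactly $q+1$ orbits, while there are $q^2+q+1$ orbits and $q^2+q+1$ curves) only yields the correct average $q+1$ on both sides; it does not by itself guarantee that every single orbit meets exactly $q+1$ curves. The explicit root count above is what upgrades this average to an exact per-orbit value, and the Frobenius elimination is the step that does the real work.
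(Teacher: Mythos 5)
Your proposal is correct and follows essentially the same route as the paper: fix the orbit through its invariant $\delta=\gamma^{q^2-q+1}$, turn the incidence condition into a degree-$(q+1)$ polynomial in $t$ (the paper normalises it as $\delta t^{q+1}+t+\delta^{q+1}$ via a $q$-th power and $t\mapsto t^{-1}$, you as $\delta^{q+1}t^{q+1}+t^q+\delta$ by multiplying by $t^q$ — the two are equivalent), and then show by a Frobenius/elimination identity that every root is automatically a $(q^2+q+1)$-th root of unity. Your explicit separability check and the verification that $t^q+\delta\neq 0$ are details the paper leaves implicit, but they do not change the argument.
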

\begin{proof} We use the computation carried out in the proof of Proposition \ref{pro30102025}. 
For a $q^4+q^2+1$-th root $\gamma$ of unity, the point $P=(\gamma:\gamma^{q^2+1}:1)$ of $\Pi$ belongs to $H_t$ if and only
if (\ref{eq30102025}) holds. Let $\delta=\gamma^{q^2-q+1}$. Since $t^{q^3+q}=t^{q+1}$, then $H_t$ passes through $P$ if and only if $1+t^q\delta^{q^2+q}+t^{q+1}\delta^q=0$. By $\delta^{q^2+q+1}=1$, the latter equation is equivalent to  
$\delta+t^q+t^{q+1}\delta^{q+1}=0$. Replacing $t$ by $t^{-1}$ gives $\delta t^{q+1}+t+\delta^{q+1}=0$. Therefore, it is enough to show that each root of the polynomial $P(X)=\delta X^{q+1}+X+\delta^{q+1}$ in $\overline{\mathbb{F}}$ is a ($q^2+q+1$)-th root of unity. This claims follows from $\delta P(X)^qX=X^{q^2+q+1}\delta^{q+1}+\delta X^{q+1}+\delta^{q^2+q+1}X$ and $\delta^{q^2+q+1}=1$.       
\end{proof}
\begin{prop}
\label{pro02112025B} Let $H_\tau$ be a Hermitian curve of $\Pi$ passing through $G$. For every point $P\in H_\tau$ lying in $\Pi$, there exists a Hermitian curve $H_t$ of $\Pi$ through $P$. 
\end{prop}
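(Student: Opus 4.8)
The plan is to reduce the statement to the orbit computation already performed for Proposition~\ref{pro02112025A}. The key observation is that whether or not a point lies on $H_t$ depends only on its $\Gamma$-orbit, so it suffices to produce one admissible parameter $t$ for the orbit of $P$.

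First I would write $P=(\gamma:\gamma^{q^2+1}:1)$ with $\gamma^{q^4+q^2+1}=1$, which is the general form of a point of $\Pi$, and set $\delta=\gamma^{q^2-q+1}$. Since $(q^2-q+1)(q^2+q+1)=q^4+q^2+1$, the element $\delta$ is a $(q^2+q+1)$-th root of unity, exactly like the admissible parameters $t$. Using $q^3+1=(q+1)(q^2-q+1)$, the membership condition \eqref{eq30102025} for $P\in H_t$ becomes $1+t\delta^{q+1}+t^{q^2+1}\delta=0$, an identity involving only the orbit invariant $\delta$.

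Next I would clear the exponent $q^2+1$ by the relation $t^{q^2+q+1}=1$ (so that $t^{q^2+1}=t^{-q}$) and multiply through by $t^q$, turning the condition into the polynomial equation $\delta^{q+1}t^{q+1}+t^q+\delta=0$ of degree $q+1$ in $t$. The crux — taken verbatim from the proof of Proposition~\ref{pro02112025A} — is that every root of this polynomial is automatically a $(q^2+q+1)$-th root of unity, hence a legitimate member of the family $\{H_t\}$; this is where one uses a Frobenius twist together with $\delta^{q^2+q+1}=1$. Because the polynomial has positive degree it certainly has a root, and this root is the parameter of a Hermitian curve $H_t$ of $\Pi$ passing through $P$.

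The only real obstacle is verifying solvability \emph{within the family} rather than merely in $\overline{\mathbb{F}}$, i.e.\ that the $q+1$ roots are genuine $(q^2+q+1)$-th roots of unity; this is settled exactly as in Proposition~\ref{pro02112025A}. Should the later application require $H_t\neq H_\tau$, I would add that $\delta^{q+1}t^{q+1}+t^q+\delta$ is separable — in characteristic $p$ its derivative collapses to $\delta^{q+1}t^q$, and $t=0$ is not a root because $\delta\neq 0$ — so it has $q+1\ge 3$ distinct roots for $q>2$; as $\tau$ accounts for only one of them, a further curve through $P$ is always available.
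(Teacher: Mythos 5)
Your proof is correct and takes essentially the same route as the paper: the paper simply observes that $P$ lies in a $\Gamma$-orbit of $H_\tau\cap\Pi$ and invokes Proposition \ref{pro02112025A}, whereas you re-derive that proposition's orbit computation directly for $P$. Your closing separability remark is a worthwhile addition, since it explicitly secures a curve $H_t\neq H_\tau$, which is what the statement is really after.
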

\begin{proof} The points of $H_\tau$ lying in $\Pi$ is partitioned into $\Gamma$-orbits, one of them passes through $P$. From Proposition \ref{pro02112025A}, $P$ also lies on some $H_t$ other than $H_\tau$.  
\end{proof}
\begin{prop}
\label{pro13112025} Two distinct Hermitian curves $H_t$ and $H_u$ have at most $q^2-q+1$ common points in $\Pi$. For each such point $P\in \Pi$, $I(P,H_t\cap H_u)=1$.  
\end{prop}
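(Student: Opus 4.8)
The plan is to reduce both assertions to the explicit description of the $\Pi$-points of $H_t$ obtained in the proof of Proposition~\ref{pro30102025}. There a point $P=(\gamma:\gamma^{q^2+1}:1)$ of $\Pi$ was shown to lie on $H_t$ precisely when, setting $\delta=\gamma^{q^2-q+1}$,
\[
1+t\delta^{q+1}+t^{q^2+1}\delta=0,\qquad \delta^{q^2+q+1}=1.
\]
Membership in $H_t$ therefore depends on $\gamma$ only through $\delta$, and since $q^4+q^2+1=(q^2-q+1)(q^2+q+1)$ each admissible $\delta$ is hit by exactly $q^2-q+1$ of the roots of unity $\gamma$. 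I would phrase the whole argument in terms of $\delta$.

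For the counting statement, I would take a point $P\in\Pi$ common to $H_t$ and $H_u$ with $t\ne u$ and subtract its two membership equations, obtaining $(t-u)\delta^{q+1}+(t^{q^2+1}-u^{q^2+1})\delta=0$. Dividing by $\delta\ne 0$ gives
\[
\delta^{q}=-\frac{t^{q^2+1}-u^{q^2+1}}{t-u},
\]
where the right-hand side is well defined since $t\ne u$. This pins down $\delta^q$, hence $\delta$ as well because $x\mapsto x^q$ is a bijection on $\overline{\mathbb F}_q$. Thus every common point in $\Pi$ carries the same $\delta$, and as that single $\delta$ lifts to at most $q^2-q+1$ points, the bound follows. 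I would also remark that the three vertices $A_1,A_2,A_0$ lie on every $H_t$ but are not points of $\Pi$, so they are rightly excluded from this count.

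For the transversality statement I would compute tangent lines, using that the $H_t$ are non-singular so that each $\Pi$-point on them is smooth. Differentiating $F_t=tX_1X_2^q+t^{q^2+1}X_2X_0^q+X_0X_1^q$ and discarding every term carrying a factor $q$ (which vanishes in characteristic $p$) should give the gradient $(tX_2^q,\,t^{q^2+1}X_0^q,\,X_1^q)$, so that the tangent to $H_t$ at $P=(\gamma:\gamma^{q^2+1}:1)$ is
\[
t\gamma^{q^3+q}X_1+t^{q^2+1}X_2+\gamma^qX_0=0.
\]
The tangent to $H_u$ at the same $P$ has the identical $X_0$-coefficient $\gamma^q\ne 0$ but the different $X_1$-coefficient $u\gamma^{q^3+q}$; equality of the two lines would force the proportionality factor to be $1$ and then $t=u$, which is excluded. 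Hence the tangents are distinct and $I(P,H_t\cap H_u)=1$.

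I expect the tangent computation to be the delicate step: everything hinges on the characteristic-$p$ cancellations producing the clean gradient, and on the structural observation that the $X_0$-coefficient of the tangent is independent of $t$ while the $X_1$-coefficient is not. The counting part is routine finite-field algebra. No appeal to B\'ezout is needed, though one could verify for consistency that the common points in $\Pi$ together with the three vertices account for the full B\'ezout number $(q+1)^2=\deg H_t\deg H_u$.
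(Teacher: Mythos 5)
Your proof is correct, but it proceeds quite differently from the paper's. The paper argues globally: since $\Gamma$ preserves both $H_t$ and $H_u$ and acts semiregularly on $\Pi$, the common points in $\Pi$ (and likewise any excess intersection multiplicity at such a point) propagate to a full $\Gamma$-orbit of size $q^2-q+1$; a second orbit, or multiplicity $\ge 2$ along one orbit, would force the total intersection number past the B\'ezout bound $(q+1)^2$ --- directly for $q>3$, and for $q=3$ after adding the contribution of the three fundamental points, which the paper's Lemma~\ref{lem13112025} shows each carry multiplicity at least $q$. Your argument is instead local and fully explicit: the subtraction of the two membership equations pins down $\delta^q$, hence $\delta$ by injectivity of the Frobenius, so all common $\Pi$-points lie in the single fibre of $\gamma\mapsto\gamma^{q^2-q+1}$ over that $\delta$, which has exactly $q^2-q+1$ elements; and the gradient computation (correct --- all terms carrying a factor $q$ vanish, the point is smooth because $\gamma^q\neq 0$, and the normalized comparison of the $X_0$- and $X_1$-coefficients forces $t=u$) gives distinct tangents and hence transversality. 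What your route buys: no case distinction at $q=3$, no appeal to B\'ezout or to the behaviour at the fundamental points, and an explicit identification of the unique $\delta$ carrying the common orbit. What the paper's route buys: brevity, a single mechanism handling both the point count and the multiplicity claim, and consistency with the orbit picture that is reused in Lemma~\ref{lem13112025B}. One tiny point worth making explicit in your write-up is the standard fact you are invoking at the end, namely that two curves smooth at $P$ with distinct tangent lines there satisfy $I(P,H_t\cap H_u)=1$.
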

\begin{proof} For any common point $P\in \Pi$, the whole $\Gamma$-orbit of $P$ is in $H_t\cap H_u$. Therefore, if $H_t$ and $H_u$ had more than $q^2-q+1$ common points in $\Pi$, (or a point $P$ with $I(P,H_t\cap H_u)\ge 2$), then they would have at least $2(q^2-q+1)$ such points (or at least $2(q^2-q+1)$ intersection multiplicities), contradicting the B\'ezout theorem for $q>3$ by $2(q^2-q+1)>(q+1)^2$. Actually this contradiction holds true for $q=3$ as $H_t\cap H_u$ contains three more points, namely the fundamental points of our projective reference system in $PG(2,q^6)$.   
\end{proof} 
Propositions \ref{pro02112025A} \ref{pro02112025B} and \ref{pro13112025} have the following corollary.
\begin{cor}
\label{pro13112025A}  $H_u\cap H_t$ contains a point lying in $\Pi$. 
\end{cor}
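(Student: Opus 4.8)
The plan is to recast the three cited propositions as a counting argument on the incidence structure whose ``points'' are the $\Gamma$-orbits contained in $\Pi$ and whose ``blocks'' are the Hermitian curves $H_t$, incidence meaning that an orbit is contained in a curve. First I would gather the relevant parameters. Since $\Pi$ has $q^4+q^2+1$ points and $\Gamma$, being a Singer subgroup, acts on $\Pi$ with every orbit of full length $q^2-q+1$, the number of $\Gamma$-orbits in $\Pi$ is $(q^4+q^2+1)/(q^2-q+1)=q^2+q+1$. By Proposition \ref{pro30102025} each $H_t$ meets $\Pi$ in exactly $q^3+1=(q+1)(q^2-q+1)$ points, and because $\Gamma$ preserves both $H_t$ and $\Pi$, this point set is $\Gamma$-invariant and hence splits into precisely $q+1$ orbits. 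Thus every block has size $q+1$, while Proposition \ref{pro02112025A} says every orbit lies on exactly $q+1$ curves. Note also that a common point of two curves lying in $\Pi$ drags its whole $\Gamma$-orbit into the intersection, so ``sharing a point of $\Pi$'' is the same as ``sharing a full orbit''.

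Next I would bound the number of distinct curves two ways. On the one hand there are at most $q^2+q+1$ curves, one for each $(q^2+q+1)$-th root of unity $t$. On the other hand, fix a curve $H_u$ and let $\mathcal{O}_1,\dots,\mathcal{O}_{q+1}$ be the orbits it contains; by Proposition \ref{pro02112025A} each $\mathcal{O}_i$ lies on $q$ curves besides $H_u$. Here Proposition \ref{pro13112025} is decisive: two distinct Hermitian curves have at most $q^2-q+1$ common points in $\Pi$, hence share at most one orbit (two shared orbits would force $2(q^2-q+1)$ common points). Consequently the $q$ companion curves attached to distinct orbits $\mathcal{O}_i$ are pairwise distinct, so $H_u$ meets exactly $q(q+1)=q^2+q$ further curves in $\Pi$.

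Combining the two counts finishes the proof: $H_u$ together with the $q^2+q$ curves it meets already accounts for $q^2+q+1$ curves, which is the maximal possible number. Hence these are all the curves, and every Hermitian curve distinct from $H_u$ shares a $\Gamma$-orbit — in particular a point of $\Pi$ — with $H_u$. Applying this to the pair $H_u,H_t$ gives precisely the assertion of the corollary (and incidentally shows the incidence structure is a projective plane of order $q$, any two curves meeting in exactly one orbit). In this scheme Proposition \ref{pro02112025B} is the qualitative precursor — every point of a curve in $\Pi$ lies on a second curve — of which Proposition \ref{pro02112025A} is the sharp quantitative form actually used.

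The main obstacle is bookkeeping rather than geometry: one must ensure that $H_t\cap\Pi$ genuinely decomposes into \emph{complete} orbits (which needs the $\Gamma$-invariance of both $H_t$ and $\Pi$), and, crucially, that the ``at most one shared orbit'' bound of Proposition \ref{pro13112025} is read off the common points lying in $\Pi$ and is not corrupted by the fundamental points $A_0,A_1,A_2$ of Lemma \ref{lem13112025}, which lie on every $H_t$ with positive intersection multiplicity but sit outside $\Pi$ and therefore contribute nothing to the orbit count. Keeping the $\Pi$-intersection strictly separated from this ``external'' part of the B\'ezout total $(q+1)^2$ is the only delicate point.
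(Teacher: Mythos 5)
Your proposal is correct and is precisely the derivation the paper leaves implicit: the corollary is stated as a consequence of Propositions \ref{pro02112025A}, \ref{pro02112025B} and \ref{pro13112025} with no written proof, and your double count (each of the $q+1$ $\Gamma$-orbits on $H_u$ lies on $q$ further curves, pairwise distinct because two curves share at most one orbit by Proposition \ref{pro13112025}, which together with $H_u$ exhausts all $q^2+q+1$ curves $H_t$) is the intended route. The bookkeeping issues you flag --- that $H_t\cap\Pi$ splits into complete orbits and that the fundamental points $A_0,A_1,A_2$ lie outside $\Pi$ and so do not enter the orbit count --- are real but are resolved exactly as you describe.
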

\begin{lem}
\label{lem13112025B} For $t\neq u$, $I(A_i,H_t\cap H_u)=q(A_1+A_2+A_0)$, and $$H_t\cdot H_u=\sum_{P\in \Omega}P+q(A_1+A_2+A_0)$$ where $\Omega$ is a $\Gamma$-orbit in $\Pi$.  
\end{lem}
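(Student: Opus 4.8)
The plan is to combine a local analysis at the three fundamental points $A_1,A_2,A_0$ with a global B\'ezout count, using the facts already established about common points lying in $\Pi$. Two ingredients must be secured: first, that each $A_i$ is a common point of $H_t$ and $H_u$ with local intersection multiplicity exactly $q$; and second, that the remaining intersection is a single $\Gamma$-orbit in $\Pi$ consisting of simple points.

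For the first ingredient, substituting the coordinates of $A_1=(1:0:0)$, $A_2=(0:1:0)$ and $A_0=(0:0:1)$ into $tX_1X_2^q+t^{q^2+1}X_2X_0^q+X_0X_1^q$ gives $0$ for every $t$, so all three fundamental points lie on every $H_t$, in particular on both $H_t$ and $H_u$. By Lemma \ref{lem13112025} the line $\ell_{i,i+1}$ is the common tangent at $A_i$ to both curves, so they are tangent there and the multiplicity exceeds $1$. To pin it down at $A_1$, I would dehomogenise by setting $X_1=1$ and writing $U=X_2$, $V=X_0$; then $H_t$ has affine equation $tU^q+t^{q^2+1}UV^q+V=0$, so $V=0$ is the tangent, $U$ is a local uniformiser, and the branch of $H_t$ through $A_1$ reads $V=-tU^q+(\text{higher order})$. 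Substituting this branch into the affine equation of $H_u$ yields $(u-t)U^q+(\text{higher order})$, whose leading coefficient $u-t$ is nonzero precisely because $t\neq u$; hence $I(A_1,H_t\cap H_u)=q$. Since the Frobenius collineation $\Phi_{\Pi}$ fixes every Hermitian curve of $\Pi$ and cyclically permutes $A_1,A_2,A_0$, it maps $H_t\cap H_u$ to itself and carries this computation to the other two vertices, giving $I(A_i,H_t\cap H_u)=q$ for every $i$, that is the contribution $q(A_1+A_2+A_0)$.

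For the second ingredient, Corollary \ref{pro13112025A} furnishes a common point $P\in\Pi$; since $\Gamma$ preserves both $H_t$ and $H_u$, the whole $\Gamma$-orbit $\Omega$ of $P$ lies in $H_t\cap H_u$, is contained in $\Pi$, has size $q^2-q+1$, and by Proposition \ref{pro13112025} consists of simple intersection points. Adding the two contributions gives the intersection number $(q^2-q+1)+3q=(q+1)^2=\deg(H_t)\deg(H_u)$, so the B\'ezout bound is already attained: there are no further common points and, in particular, $\Omega$ is the only $\Gamma$-orbit occurring. This yields $H_t\cdot H_u=\sum_{P\in\Omega}P+q(A_1+A_2+A_0)$.

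The main obstacle is the local multiplicity computation: one must verify that, after substituting the branch $V=-tU^q+\cdots$ of $H_t$ into the equation of $H_u$, the coefficient of $U^q$ is indeed $u-t$ and does not vanish. This is where the hypothesis $t\neq u$ enters decisively, and it is also what guarantees that the contact order is exactly $q$ rather than larger. At the vertex $A_0$ the analogous leading coefficient takes the shape $1-(u/t)^{q^2+1}$, which is nonzero for $t\neq u$ because $\gcd(q^2+q+1,q^2+1)=1$ forces $(u/t)^{q^2+1}=1$ only when $u=t$; the Frobenius symmetry makes it enough to carry out the computation at a single vertex.
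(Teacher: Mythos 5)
Your proof is correct and follows essentially the same route as the paper: a local contribution of $q$ at each fundamental vertex, a full $\Gamma$-orbit of simple common points in $\Pi$ supplied by Corollary \ref{pro13112025A}, and a B\'ezout count showing the total $(q+1)^2$ is exhausted. The only difference is that you compute $I(A_i,H_t\cap H_u)=q$ exactly via an explicit branch expansion (leading coefficient $u-t$), whereas the paper only infers $I(A_i,H_t\cap H_u)\ge q$ from Lemma \ref{lem13112025} and lets the B\'ezout equality force the rest.
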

\begin{proof} From Lemma \ref{lem13112025}, $I(A_i,H_t\cap H_u)\ge q$. From Corollary \ref{pro13112025A}, there exists a common point $P\in \Pi$ of $H_t$ and $H_u$. Since the $\Gamma$-orbit of $P$ is in $H_t\cap H_u$,
the B\'ezout theorem yields $(q+1)^2\ge 3q +q^2-q+1$ whence $I(A_i,H_t\cap H_u)=q$ and $H_t\cap H_u$ consists of the vertices of the fundamental triangle together with the points of a unique $\Gamma$-orbit. 
\end{proof}

The advantage of representing the Hermitian curve of $\Pi$ by $H$ is that the generator $\omega$ of the Singer subgroup $\Gamma$ is diagonal, as being represented by the matrix $A^{q^2+q+1}$ equal to
\[B=\left(\begin{array}{ccc}
\beta   & 0& 0\\
0& \beta^{q^2+1}&0\\
0 & 0&1
\end{array}\right)
=\left(\begin{array}{ccc}
\beta   & 0& 0\\
0& \beta^{q} &0\\
0 & 0&1
\end{array}\right)
\]
where $\beta$ is a primitive $(q^2-q+1)$-th root of the unity of $\mathbb{F}_{q^6}$. Thus the $\Gamma$-orbit of a point $P=(\alpha:\alpha^{q^2+1}:1)$ is the set
$$K=\{(\alpha\beta^i:(\alpha\beta^i)^{q^2+1}:1)|i=0,\ldots,q^2-q\}.$$

The set $K$ is well known in Finite geometry under the name of a Singer-arc. It is a complete $(q^2-q+1)$-arc of $\Pi$, the largest known complete arc other than the irreducible conics. Here we need a new property of $K$ that we state in the following proposition. 
\begin{prop}
\label{pro08112025} There exists a point in $\Pi\setminus H_t$ such that the number of chords of $H_t$ through $U$ which are disjoint from $K$ is at least $\frac{1}{2}(q-1)^2-1$. 
\end{prop}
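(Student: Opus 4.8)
The plan is to fix the Hermitian curve $H_t$ so that $K\subseteq H_t$; this is the relevant configuration, since by Proposition \ref{pro02112025A} the orbit $K$ lies on $q+1$ of the curves $H_t$, and because $\Gamma$ preserves each $H_t$ a $\Gamma$-orbit is either contained in $H_t$ or disjoint from it. I would then use the standard incidence geometry of a Hermitian curve in $\Pi\cong PG(2,q^2)$: every line meets $H_t$ in one point (a tangent) or in $q+1$ points (a chord), and through a point $U\in\Pi\setminus H_t$ there pass exactly $q+1$ tangents and hence $q^2-q$ chords, the tangents touching $H_t$ at the $q+1$ points of $H_t$ on the polar line $U^{\perp}$.

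First I would turn the statement into an exact count. Fix $U\in\Pi\setminus H_t$ and, for each $Q\in K\subseteq H_t$, look at the line $UQ$: it equals the tangent $t_Q$ precisely when $U\in t_Q$, i.e.\ when $Q\in U^{\perp}$, and is a chord otherwise, so exactly $(q^2-q+1)-|K\cap U^{\perp}|$ points of $K$ lie on chords through $U$. Since $K$ is an arc, no line carries more than two of its points, so each chord through $U$ meets $K$ in $0$, $1$ or $2$ points; writing $c_0,c_1,c_2$ for the respective numbers of chords, one has $c_0+c_1+c_2=q^2-q$ and $c_1+2c_2=(q^2-q+1)-|K\cap U^{\perp}|$. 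Eliminating $c_1$ yields
\[
c_0 = c_2 + |K\cap U^{\perp}| - 1 \ge c_2 - 1,
\]
where $c_0$ is exactly the number of chords of $H_t$ through $U$ disjoint from $K$, and $c_2$ is the number of $2$-secants of the arc $K$ through $U$.

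It therefore suffices to find $U\in\Pi\setminus H_t$ lying on at least $\tfrac12(q-1)^2$ secants of $K$, and here I would use a double count. The arc $K$ has $\binom{q^2-q+1}{2}$ secants, each of which, being a chord of $H_t$, carries $q+1$ points of $H_t$ and hence $q^2-q$ points of $\Pi\setminus H_t$. Counting incidences between the secants of $K$ and the $q^2(q^2-q+1)$ points of $\Pi\setminus H_t$ gives a total of $\binom{q^2-q+1}{2}(q^2-q)$, so the average number of secants of $K$ through a point of $\Pi\setminus H_t$ equals $\tfrac{(q^2-q)^2}{2q^2}=\tfrac12(q-1)^2$. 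By pigeonhole some $U\in\Pi\setminus H_t$ lies on at least $\tfrac12(q-1)^2$ secants of $K$, and for this $U$ the displayed inequality gives $c_0\ge \tfrac12(q-1)^2-1$, as required.

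The computations are routine; the delicate points are organizational. One must run the averaging over $\Pi\setminus H_t$ (not over all points off $K$) so that the extremal point $U$ is automatically not on $H_t$, and one must exploit the arc property of $K$ twice: to bound the number of $K$-points per line by two and, together with $K\subseteq H_t$, to ensure that the secants of $K$ really are chords of $H_t$. I expect the main step to be conceptual rather than computational, namely recognizing the identity $c_0=c_2+|K\cap U^{\perp}|-1$, which reduces the statement about chords of $H_t$ avoiding $K$ to the purely combinatorial problem of locating a point on many secants of the Singer-arc $K$.
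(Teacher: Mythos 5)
Your proof is correct and follows essentially the same route as the paper: a double count of incidences between the secants of $K$ and the points of $\Pi\setminus H_t$ to locate a point $U$ on at least $\tfrac12(q-1)^2$ secants, followed by the arc-property arithmetic showing that at least $c_2-1$ of the $q^2-q$ chords of $H_t$ through $U$ miss $K$. Your identity $c_0=c_2+|K\cap U^{\perp}|-1$ is just a slightly more explicit bookkeeping of the paper's final estimate.
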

\begin{proof} For a point $P$ of $\Pi$ not lying in $H_t$, let $c_P$ denote the number of chords of $H_t$ passing through $P$. Let $c$ be the maximum value of $c_P$ when $P$ ranging in $\Pi\setminus H_t$.
Counting the incident point-line pairs $(P,r)$, where $P$ runs over the $q^4-q^3+q^2$ points in $\Pi\setminus H_t$ while $r$ does over $\frac{1}{2}(q^2-q+1)(q^2-q)$ the chords of $K$, gives 
$$(q^4-q^3+q^2)c\ge \sum_P c_P=\textstyle{\frac{1}{2}}(q^2-q+1)(q^2-q)(q^2-q)$$ 
whence $c\ge \frac{1}{2}(q-1)^2$ follows. Take a point $U$ for which $c_U=c$. As $K$ is an arc, through $U$ we find exactly $(q^2-q+1)-2c$ $1$-secants to $K$. Since $U$ is the common point of exactly $q^2-q$ chords of $H_t$, it turns out that at least $q^2-q-(q^2-q+1-c)$ chords of $H_t$ through $U$ are disjoint from $K$. Since $c\ge \frac{1}{2}(q-1)^2$, the claim follows.
\end{proof}



\section{Functional and differential codes from the Singer group} 
\subsection{Hermitian Singer functional code}\label{sec3.1} Assume that the non-singular plane curve $\mathcal{F}$ is defined over a finite subfield $\mathbb{L}$ of $\mathbb{K}$. Let $\mathtt{G}$ be a divisor of $\mathcal{F}$ over $\mathbb{L}$ with support $G$. Let $D$ be the set of all points of $\mathcal{F}$ in $PG(2,\mathbb{L})$ other than those in $G$. Fix an ordering $(Q_1,Q_2,\ldots,Q_n)$ of the points in $D$, and let $\mathtt{D}=Q_1+\ldots+Q_n$, the associated divisor. 
Let $\mathcal{L}(\mathtt{G})$ be the Riemann Roch space of $\mathtt{G}$ defined over $\mathbb{L}$. 

For any function $f\in\mathcal L(\mathtt G)$ defined over $\mathbb{L}$, the evaluation of $f$ on $\mathtt{D}$ is given by $ev_{\mathtt D}(f) =(f(Q_1),\ldots, f(Q_n))$. Assume that $n>\deg(G)>2\mathfrak{g}-2$. Then the arising evaluation map $ev_{\mathtt D}:\mathcal L(\mathtt G)\rightarrow\mathbb{L}$ is $\mathbb{L}$-linear and injective. Its image is the {\emph{functional}}-code $C_{\mathcal{L}}(\mathtt{D},\mathtt{G})$ of length $n$, dimension $k=\deg(\mathtt{G})-\mathfrak{g}+1$ and minimum distance $d\geq \delta$ where $\delta=n-\deg(\mathtt{G})$ is the \emph{designed minimum distance}.

From now on, $\mathbb{L}$ stands for the finite subfield of $\mathbb{K}$ of order $q^2$. Fix a $(q^2+q+1)$-th root of unity $\tau$ in $\mathbb{F}_{q^6}$, and let $H_\tau$ be a Hermitian curve of $\Pi$ of homogeneous equation $\tau X_1X_2^q+\tau^{q^2+1}X_2X_0^q+X_0X_1^q=0$. For $G$, take the orbit of a point $P\in H_\tau\cap \Pi$ under the action of a Singer subgroup $\Gamma$ of $PGU(3,q)$. Let $C_{\mathcal{L}}(\mathtt D,\mathtt G)$ be the functional code on $H_\tau$ where $\mathtt{G}=P_1+\ldots P_N$ with $G=\{P_1,\ldots P_N\}, N=q^2-q+1$, and $\mathtt{D}=Q_1+\ldots+Q_n$ with $D=\{Q_1,\ldots,Q_n\},n=q^3+1-(q^2-q+1)$ such that $G\dot\cup D$ is a partition of the set of all points of $H_\tau$ on $\Pi$.  
We call $C_\mathcal{L}(\mathtt{D},\mathtt{G})$ the \emph{Hermitian Singer functional code}. For every positive integer $\lambda\ge 2$, a generalization is obtained whenever $\mathtt{G}$ is replaced by $\lambda\mathtt{G}$. 

Since $\deg(\mathtt G)=q^2-q+1$ is greater than $2\mathfrak g-2=q^2-q-2$, the Riemann-Roch space $\mathcal{L}(\mathtt G)$ has dimension $\deg(\mathtt G)-\mathfrak g+1=q^2-q+1-\frac{1}{2}(q^2-q)+1=\frac{1}{2}(q^2-q)+2$. We determine  a basis of $\mathcal L(\mathtt G)$. From Proposition \ref{pro02112025A}, there exists another Hermitian curve $\mathcal{H}_t$ of affine equation $F(X,Y)=t XY^q+t^{q^2+1}Y+X^q=0$ through $G$. Look at the intersection divisor 
    $H_\tau\circ H_t$ of $H_\tau$ and $H_t$. 
From the second claim of Lemma \ref{lem13112025B}, 
$F(X,Y)^{-1}\in \mathcal{L}(\mathtt{G})$. To find more functions in $\mathcal{L}(\mathtt{G})$, consider the sides $\ell_1,\ell_2,\ell_3$ of the fundamental triangle of $PG(2,q^6)$. From Lemma \ref{lem13112025}, for each such line $\ell$, $I(V,H_t\cap \ell)=q$. Therefore, the intersection divisor $H_t\circ (\ell_1\ell_2\ell_3)=(q+1)(V_1+V_2+V_3)$. It turns out that for any polynomial $G(X,Y)$ of degree at most $q-2$ which represents a curve defined over $\Pi$, 
\begin{equation}
\label{eq04112025}
    \frac{G(X,Y)\ell_1\ell_2\ell_3}{F(X,Y)}\in \mathcal{L}(\mathtt{G}).
\end{equation}
The linear system of such polynomials $G(X,Y)$ has dimension $\frac{1}{2}(q^2-q)$. Therefore, the functions in (\ref{eq04112025}) form a subspace $\Lambda$ of $\mathcal{L}(\mathtt{G})$ with $\dim(\Lambda)=\frac{1}{2}(q^2-q)$. 
Now, take point $Q\in D$. By Proposition \ref{pro02112025A}, there is a Hermitian curve $H_u$ through $Q$. Since 
$\Pi\cap (H_\tau\cap H_t)=G$ and $Q\not\in G$, $H_u$ is neither $H_\tau$, nor $H_t$. Let $H(X,Y)=uXY^q+u^{q^2+1}Y+X^q=0$ 
be an affine equation of $H_u$. Then $H(X,Y)/F(X,Y)\in \mathcal{L}(\mathtt{G})$ and $H(X,Y)/F(X,Y)\not\in \Lambda$. 
Thus $\Lambda$ together with $H(X,Y)/F(X,Y)$ and a non-zero constant function, determine a basis of 
$\mathcal{L}(\mathtt{G})$. Since $H(X,Y)/F(X,Y)$ has exactly $q^2-q+1$ zeros in $D$, the weight of $H(X,Y)/F(X,Y)$ in $C_{\mathcal{L}}(\mathtt D,\mathtt G)$ equals $q^3+1-2(q^2-q+1)$. On the other hand, the designed minimum distance of $C_{\mathcal{L}}(\mathtt D,\mathtt G)$ is $n-\deg(\mathtt{G})=q^3+1-2(q^2-q+1)$. Therefore, the following result is obtained. 
\begin{thm}\label{main} The functional Hermitian Singer code 
    $C_{\mathcal{L}}(\mathtt D,\mathtt G)$ is a $$[q(q^2-q+1), \textstyle{\frac{1}{2}}(q^2-q)+2, q^3+1-2(q^2-q+1)]_{q^2}
    $$ linear code whose  minimum distance is equal to the designed minimum distance. 
\end{thm}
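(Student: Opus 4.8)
The plan is to handle the three parameters separately, reading off the length and dimension from the setup and the Riemann-Roch theorem and concentrating the real work on the minimum distance. The length is immediate: $n=|D|=(q^3+1)-|G|=(q^3+1)-(q^2-q+1)=q(q^2-q+1)$. For the dimension, since $\deg(\mathtt{G})=q^2-q+1>2\mathfrak{g}-2=q^2-q-2$, the Riemann-Roch theorem (\ref{eq02112025}) gives $\ell(\mathtt{G})=\deg(\mathtt{G})-\mathfrak{g}+1=\frac{1}{2}(q^2-q)+2$, and because $n>\deg(\mathtt{G})$ the evaluation map $ev_{\mathtt{D}}$ is injective, so the code has dimension $k=\ell(\mathtt{G})$. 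The general theory of functional AG codes already gives $d\ge\delta$, where $\delta=n-\deg(\mathtt{G})=q^3+1-2(q^2-q+1)$ is the designed minimum distance, so the substance of the statement is the reverse inequality $d\le\delta$.

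To prove $d\le\delta$ I would produce an explicit codeword of weight exactly $\delta$, namely the evaluation of the function $f=H(X,Y)/F(X,Y)$ introduced just before the statement, where $H_u$ is a Hermitian curve of $\Pi$ through a fixed point $Q\in D$ chosen distinct from $H_\tau$ (possible since, by Proposition \ref{pro02112025A}, the $\Gamma$-orbit of $Q$ lies on $q+1\ge 4$ Hermitian curves of $\Pi$), and $F$ is the equation of $H_t$. First I would note that $f$ is a genuine element of $\mathcal{L}(\mathtt{G})$ whose zeros on $D$ are exactly the points of $D$ lying on $H_u$: every point of $D$ lies on $H_\tau$ but not on $H_t$ --- because $H_\tau\cap H_t\cap\Pi=G$ and $D$ is disjoint from $G$ --- so $F$ has no zero on $D$ and only the numerator $H$ can vanish there.

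The decisive step is then to count these zeros. By Lemma \ref{lem13112025B}, the intersection divisor $H_\tau\cdot H_u$ is the sum of the three fundamental vertices, each with multiplicity $q$, and a single $\Gamma$-orbit $\Omega\subset\Pi$; since $Q\in H_\tau\cap H_u\cap\Pi$ and the vertices lie outside $\Pi$, the orbit $\Omega$ must be the $\Gamma$-orbit of $Q$. As $\Gamma$-orbits are either disjoint or equal and $Q\notin G$, the whole of $\Omega$ avoids $G$ and so lies in $D$; hence $f$ vanishes at exactly $|\Omega|=q^2-q+1$ points of $D$. Consequently $ev_{\mathtt{D}}(f)$ has weight $n-(q^2-q+1)=q^3+1-2(q^2-q+1)=\delta$, which together with $d\ge\delta$ forces $d=\delta$.

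The hard part is this final count, which rests entirely on Lemma \ref{lem13112025B}: that two distinct Hermitian curves of $\Pi$ meet the subplane in exactly one full $\Gamma$-orbit of size $q^2-q+1$ and otherwise only at the fundamental triangle. The two points demanding care are confirming that the common orbit $\Omega$ is disjoint from $G$, so that it contributes true zeros in $D$ rather than poles, and observing that the fundamental vertices, where $F$ and $H$ interact nontrivially, contribute nothing to the weight because they lie outside $\Pi$ and hence outside $D$.
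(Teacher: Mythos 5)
Your proposal is correct and follows essentially the same route as the paper: read off the length, get the dimension from Riemann--Roch since $\deg(\mathtt{G})>2\mathfrak{g}-2$, and attain the designed distance by evaluating $H(X,Y)/F(X,Y)$ for a second Hermitian curve $H_u$ of $\Pi$ through a point $Q\in D$. The only difference is that you spell out, via Lemma \ref{lem13112025B}, why this function has exactly $q^2-q+1$ zeros in $D$ (the common $\Gamma$-orbit lies in $\Pi$, is disjoint from $G$, and the fundamental vertices fall outside $\Pi$), a count the paper asserts without elaboration.
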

The subspace $\Lambda$ in the above discussion is a subcode of $C_{\mathcal{L}}(\mathtt{D,\mathtt{G}})$ of length $q(q^2-q+1)$, dimension $\frac{1}{2}(q^2-q)$ and designed minimum distance $q^3-2(q^2-q-1)$. We show that the real minimum distance equals $q^3-2(q^2-q)+1$. The zeros of the functions in $\Lambda$ are contained in the intersection of $H_t$ and a curve of equation $G(X,Y)$ of degree at most $q-2$. Since $H_t$ is absolutely irreducible, the B\'ezout theorem yields that the size of this intersection at most $(q+1)(q-2)$. Actually, this bound is attained for some functions in $\Lambda$. For instance, as $\frac{1}{2}(q-1)^2-1\ge q-2$,  Proposition \ref{pro08112025} gives a method to find such a function by taking $q-2$ chords through a suitable chosen point $U\in \Pi\setminus H_t$, and define $G(X,Y)$ as the product of the linear polynomials representing those $q-2$ lines. Therefore, the following result holds.  
\begin{thm} The Hermitian Singer functional code 
\label{the08112025} $C_{\mathcal{L}}(\mathtt{D},\mathtt{G})$ contains a subcode $$[q(q^2-q+1), \textstyle{\frac{1}{2}}(q^2-q), q^3-2(q^2-q-1)]_{q^2}$$ whose minimum distance improves on the designed minimum distance by $3$.
\end{thm}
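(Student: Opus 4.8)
The plan is to determine the minimum distance of $\Lambda$ exactly, by establishing a lower bound valid for every nonzero codeword together with a single codeword attaining it, and then comparing the result with the designed distance $\delta=n-\deg(\mathtt{G})=q^3-2q^2+2q-1$ of the ambient code $C_{\mathcal{L}}(\mathtt{D},\mathtt{G})$. The length $n=q(q^2-q+1)$ and the dimension $\tfrac{1}{2}(q^2-q)$ of $\Lambda$ have already been recorded, so only the distance is at stake.

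First I would reduce the weight computation to counting zeros of $G$ on $H_\tau$. A nonzero codeword of $\Lambda$ is $ev_{\mathtt{D}}(f)$ with $f=G(X,Y)\ell_1\ell_2\ell_3/F(X,Y)$ and $\deg G\le q-2$, and I claim its weight equals $n-|v(G)\cap D|$. Indeed, the three sides $\ell_1,\ell_2,\ell_3$ meet $H_\tau$ only at the fundamental vertices $A_0,A_1,A_2$ by Lemma \ref{lem13112025}, and these do not lie in $\Pi$, while $F=0$ is the Hermitian curve $H_t$ and $H_t\cap H_\tau\cap\Pi=G$ by Lemma \ref{lem13112025B}; hence neither $\ell_1\ell_2\ell_3$ nor $F$ vanishes at any point of $D=(H_\tau\cap\Pi)\setminus G$, so for $Q\in D$ one has $f(Q)=0$ precisely when $G(Q)=0$. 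Since $\deg G\le q-2<q+1=\deg H_\tau$ and $H_\tau$ is absolutely irreducible, $G$ is not identically zero on $H_\tau$, and Bézout's theorem bounds the number of common points by $(q-2)(q+1)=q^2-q-2$. Therefore $|v(G)\cap D|\le q^2-q-2$ and every nonzero codeword has weight at least $n-(q^2-q-2)=q^3-2(q^2-q-1)=\delta+3$.

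For attainment I would produce a $G$ realizing the Bézout bound with all its intersection points lying in $D$. Proposition \ref{pro08112025}, whose counting argument uses only that $K=G$ is the Singer arc contained in a Hermitian curve of $\Pi$, applies equally with $H_\tau$ in place of $H_t$ and yields a point $U\in\Pi\setminus H_\tau$ through which pass at least $\tfrac{1}{2}(q-1)^2-1$ chords of $H_\tau$ disjoint from $K$. As $\tfrac{1}{2}(q-1)^2-1\ge q-2$ for $q\ge 3$, I select $q-2$ such chords and let $G$ be the product of the $q-2$ linear forms defining them, so that $\deg G=q-2$ and $G$ is defined over $\Pi$. Each chosen chord is a secant meeting $H_\tau$ in $q+1$ points of $\Pi$, all avoiding $K=G$ and hence lying in $D$; since the chords share only the point $U\notin H_\tau$, no two of them meet $H_\tau$ in a common point, so the $(q-2)(q+1)$ points are pairwise distinct. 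Thus $|v(G)\cap D|=q^2-q-2$, and the associated codeword has weight exactly $q^3-2(q^2-q-1)=\delta+3$.

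Combining the two bounds gives minimum distance exactly $q^3-2(q^2-q-1)$, which is three more than the designed distance $\delta$ of the ambient code. I expect the attainment step to be the main obstacle: one must secure a degree-$(q-2)$ curve whose entire Bézout intersection with $H_\tau$ lands in $D$ and is free of repeated points, and this is precisely what Proposition \ref{pro08112025} provides by furnishing enough $K$-free chords concurrent at a single external point.
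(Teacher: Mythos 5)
Your proof is correct and follows essentially the same route as the paper: reduce the weight of a codeword $G\ell_1\ell_2\ell_3/F$ to counting zeros of $G$ on the Hermitian curve carrying $D$, bound that count by B\'ezout as $(q-2)(q+1)$, and attain the bound via Proposition \ref{pro08112025} by taking $q-2$ chords through a common external point that miss $K$. Your version is in fact slightly more careful than the paper's sketch --- it makes the reduction to $v(G)\cap D$ explicit, verifies distinctness of the $(q-2)(q+1)$ intersection points, and correctly applies the chord-counting argument to $H_\tau$ where the paper's text loosely writes $H_t$.
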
 
Let $2\le \lambda\le q-1$. Then our arguments leading to Theorem \ref{main} can also be used to deal with the generalized Hermitian Singer codes. 
\begin{thm}\label{main1} The generalized Hermitian Singer functional code 
$C_{\mathcal{L}}(\mathtt D,\lambda\mathtt G)$ is a $$[q(q^2-q+1), \textstyle{\frac{1}{2}}(2\lambda-1)(q^2-q)+\lambda+1, (q-\lambda)(q^2-q+1)]_{q^2}$$ linear code whose  minimum distance is equal to the designed minimum distance. 
\end{thm}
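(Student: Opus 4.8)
The plan is to imitate the proof of Theorem~\ref{main}, obtaining a minimum-weight word of $C_{\mathcal{L}}(\mathtt{D},\lambda\mathtt{G})$ as a product of $\lambda$ words of the type used there. First I would pin down the parameters. The length is $n=|D|=q(q^2-q+1)$. As $\deg(\lambda\mathtt{G})=\lambda(q^2-q+1)>q^2-q-2=2\mathfrak{g}-2$, the Riemann--Roch theorem \eqref{eq02112025} gives
\[
\ell(\lambda\mathtt{G})=\deg(\lambda\mathtt{G})-\mathfrak{g}+1=\lambda(q^2-q+1)-\textstyle{\frac12}(q^2-q)+1=\textstyle{\frac12}(2\lambda-1)(q^2-q)+\lambda+1,
\]
the asserted dimension $k$. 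Since $\lambda\le q-1$ we have $n>\deg(\lambda\mathtt{G})>2\mathfrak{g}-2$, so $ev_{\mathtt{D}}$ is injective and the bound for functional codes of Section~\ref{sec3.1} yields $d\ge\delta$ with designed distance $\delta=n-\deg(\lambda\mathtt{G})=(q-\lambda)(q^2-q+1)$. It then suffices to produce a word of weight exactly $\delta$.

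For this I would use that $H_\tau\cap\Pi$ is partitioned into $q+1$ $\Gamma$-orbits, one of which is $G$; hence $D$ is the disjoint union of $q$ distinct $\Gamma$-orbits. Because $\lambda\le q-1<q$, pick $\lambda$ of them, say $\Omega_1,\dots,\Omega_\lambda$. By Proposition~\ref{pro02112025A} each $\Omega_j$ lies on a Hermitian curve $H_{u_j}$ of $\Pi$ distinct from $H_\tau$, and by Lemma~\ref{lem13112025B} the orbit $\Omega_j$ is the unique $\Gamma$-orbit in $\Pi$ shared by $H_\tau$ and $H_{u_j}$. Writing $H_{u_j}(X,Y)=u_jXY^q+u_j^{q^2+1}Y+X^q$ for the affine equation of $H_{u_j}$, I consider
\[
f=\prod_{j=1}^{\lambda}\frac{H_{u_j}(X,Y)}{F(X,Y)}.
\]

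Exactly as for $\lambda=1$ in Theorem~\ref{main}, each factor $H_{u_j}/F$ is a function defined over $\mathbb{F}_{q^2}$ lying in $\mathcal{L}(\mathtt{G})$, with $\mathrm{div}_{H_\tau}(H_{u_j}/F)=\sum_{P\in\Omega_j}P-\sum_{P\in G}P$; in particular its only zeros on $D$ are the $q^2-q+1$ points of $\Omega_j$ and its poles are confined to $G$. Hence $f$ is defined over $\mathbb{F}_{q^2}$ and, since $\mathrm{div}(f)+\lambda\mathtt{G}=\sum_{j=1}^{\lambda}\big(\mathrm{div}(H_{u_j}/F)+\mathtt{G}\big)\succcurlyeq 0$, we get $f\in\mathcal{L}(\lambda\mathtt{G})$. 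A point $Q_i\in D$ is a zero of $f$ precisely when it lies in some $\Omega_j$, and as the orbits $\Omega_1,\dots,\Omega_\lambda$ are pairwise disjoint, $f$ vanishes on exactly $\lambda(q^2-q+1)$ points of $D$. Therefore the weight of $ev_{\mathtt{D}}(f)$ equals $n-\lambda(q^2-q+1)=(q-\lambda)(q^2-q+1)=\delta$, and together with $d\ge\delta$ this forces $d=\delta$.

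The delicate point is the divisor bookkeeping behind the identity $\mathrm{div}_{H_\tau}(H_{u_j}/F)=\sum_{\Omega_j}P-\sum_{G}P$. Applying Lemma~\ref{theo6.2HKT} to $H_\tau$ with $\mathcal{U}=H_{u_j}$ and with $\mathcal{U}=H_t$, and invoking Lemma~\ref{lem13112025B} for the two intersection divisors $H_\tau\cdot H_{u_j}$ and $H_\tau\cdot H_t$, one sees that $\mathrm{div}_{H_\tau}(H_{u_j})$ and $\mathrm{div}_{H_\tau}(F)$ carry identical contributions at the fundamental triangle $A_0,A_1,A_2$, so these cancel in the quotient and leave only the two orbit divisors; this is precisely the verification already needed in the case $\lambda=1$, and it is what legitimises passing to the product $f$. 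The only extra thing to check is that the chosen orbits are genuinely distinct, which is automatic since distinct $\Gamma$-orbits are disjoint, ensuring that the zeros of $f$ do not collide and the weight is exactly $\delta$.
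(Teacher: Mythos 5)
Your proposal is correct, and it follows the same overall strategy as the paper: Riemann--Roch for the dimension, the standard functional-code bound for $d\ge\delta$, and an explicit word of weight $\delta$ built as a product of $\lambda$ ``Hermitian quotients.'' However, the witness function differs from the paper's in a way worth noting. The paper's proof takes $\lambda$ Hermitian curves $H_{t_1},\dots,H_{t_\lambda}$ \emph{through $G$} (distinct from $H_\tau$) and asserts that the product $F_1\cdots F_\lambda$ of their equations lies in $C_{\mathcal{L}}(\mathtt D,\lambda\mathtt G)$ with $\lambda(q^2-q+1)$ zeros in $D$; as literally written this cannot be right, since by Lemma~\ref{lem13112025B} each $H_{t_j}$ meets $H_\tau\cap\Pi$ exactly in $G$, so $F_1\cdots F_\lambda$ vanishes at no point of $D$ (and, being a polynomial with zeros at $\lambda\mathtt{G}$ and poles at infinity, is not a function of $\mathcal{L}(\lambda\mathtt{G})$ either). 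Your construction --- Hermitian curves $H_{u_j}$ through $\lambda$ \emph{distinct $\Gamma$-orbits contained in $D$}, each divided by the fixed equation $F$ of one Hermitian curve through $G$ --- is the correct generalization of the function $H(X,Y)/F(X,Y)$ used for Theorem~\ref{main}: the $q\mathtt{U}$ and line-at-infinity contributions cancel in each quotient, giving $\mathrm{div}(f)=\sum_j\sum_{P\in\Omega_j}P-\lambda\mathtt{G}$, hence $f\in\mathcal{L}(\lambda\mathtt{G})$ with exactly $\lambda(q^2-q+1)$ zeros in $D$ and weight $(q-\lambda)(q^2-q+1)$. In short, your argument is not only valid but repairs what appears to be a slip (or at best a misstatement) in the paper's own proof; the only hypotheses you use --- that $D$ splits into $q$ orbits so $\lambda\le q-1$ of them can be chosen, and Proposition~\ref{pro02112025A} to supply the curves $H_{u_j}$ --- are exactly those available in the text.
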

\begin{proof} Since $\deg(\lambda\mathtt{G})>q^2-q-2=2\mathfrak{g}-2$, the Riemann-Roch theorem yields that $\dim(\lambda\mathtt{G})=\lambda(q^2-q+1)-\mathfrak{g}+1=\frac{1}{2}(2\lambda-1)(q^2-q)+\lambda+1$. It remains to exhibit a function in $C_{\mathcal{L}}(\mathtt D,\lambda\mathtt G)$ which has as many as $\lambda(q^2-q+1)$ zeros in $D$.  According to Proposition \ref{pro02112025A}, take $\lambda$ Hermitian curve $H_t$ defined over $\Pi$  containing $G$ which are different from $H_\tau$. If $F_1(X,Y)=0, \ldots, F_\lambda(X,Y)=0$ are their equations, then the product $F_1\cdots F_\lambda$ is in $C_{\mathcal{L}}(\mathtt D,\lambda\mathtt G)$ and has exactly $\lambda(q^2-q+1)$ zeros in $D$.   
 \end{proof}


\subsection{Hermitian Singer differential code}\label{sec5}
We keep upon our notation. Moreover, $C_\Omega(\mathtt D,\mathtt{G})$ stands for the Goppa (differential) code over the Hermitian curve $\mathcal{H}(q)$. We call $C_\Omega(\mathtt D,\mathtt{G})$ the \emph{Hermitian Singer differential} code. 

From \cite[Theorem 12.17]{P}, there is a canonical divisor $\mathtt{W}$ such that
$$C_\Omega(\mathtt D,{\mathtt{G}})\cong C_L(\mathtt D,{\mathtt{W}}+{\mathtt{D}}-{\mathtt{G}}).$$
The choice of $\mathtt{W}$ is not arbitrary, the requirement being the condition that for any point $P\in D$
\begin{equation}\label{eq31072025}
{\mathtt{W}}(P)+{\mathtt{D}}(P)-{\mathtt{G}}(P)=0.
\end{equation}
 Since ${\rm{div}}(dx)=(2\mathfrak{g}-2)Y_\infty$, it is  straightforward to verify that
$${\mathtt{W}}=\frac{F^2}{L} dx$$
satisfies \eqref{eq31072025} when ${\mathcal{H}}(q)$ is given in its canonical equation $y^q+y-x^{q+1}=0$, $\mathcal{C}$ is another Hermitian curve $F_q$ of equation $F(x,y)=0$ through the support of ${\mathtt{G}}$, and $L$ is the product of $q^2-q$ lines through an external point $R$ to ${\mathcal{H}}(q)$ together with the polar line of $R$ with respect to the unitary polarity associated with ${\mathcal{H}}(q)$.
We show that
$${\mathtt{W}}+{\mathtt{D}}-{\mathtt{G}}\equiv (q^3-q^2-5q-3)Y_\infty+ 2q{\mathtt{U}}$$
where ${\mathtt{U}}=U_1+U_2+U_3$ and $U_1,U_2,U_3$ are the common points of ${\mathcal{H}}(q)$ and $F$ in the cubic extension $PG(2,q^6)$ of $PG(2,q^2)$. As we have already observed in section \ref{sec3.1}, the intersection multiplicity $I(U_i,{\mathcal{H}}(q)\cap F)$ equals $q$ for $i=1,2,3$, and if $\ell_1\ell_2\ell_3$ is the cubic curve which is the product of the sides $\ell_i$ of the triangle $U_1U_2U_3$, then $I(U_{i},{\mathcal{H}}(q)\cap \ell_1\ell_2\ell_3)=q+1$ for $i=1,2,3$.
By Lemma \ref{lem13112025}, 
$$
\begin{array}{ccc}
{\mathtt{W}}+{\mathtt{D}}-{\mathtt{G}}\equiv \\
(2\mathfrak{g}-2)Y_\infty+2{\mathtt{G}}+ 2q{\mathtt{U}}+(q^3+1)Y_\infty-\\
2(q+1)^2Y_\infty
-\sum_{P\in {\mathcal{H}}(q)}P+{\mathtt{D}}-\mathtt{G}\equiv \\
(q^3-q^2-5q-3)Y_\infty+2q\mathtt{U}.
\\
\end{array}$$
Since $\mathtt{G}+q\mathtt{U}\equiv (q+1)^2Y_\infty$, this can also be written as
$$(q^2-1)(q+1)Y_\infty-2 \mathtt{G}.$$
Moreover, $$\deg({\mathtt{W}}+{\mathtt{D}}-{\mathtt{G}})=q^3-q^2+q-3.$$ Since $\deg(\mathtt W+\mathtt D-\mathtt G)>q^2-q-2=2\mathfrak{g}-2$, the Riemann-Roch theorem yields $$k=\deg(\mathtt W+\mathtt D-\mathtt G)-\mathfrak{g}+1=q^3-\textstyle{\frac{3}{2}}q^2+\frac{3}{2}q-2.$$
Also, the designed minimum distance is $\delta=(q^3+1)-(q^2-q+1)-\deg(\mathtt W+\mathtt D-\mathtt G)=3$.

We exhibit a function in $\mathcal{L}(\mathtt{W}+\mathtt{D}-\mathtt{G})=\mathcal{L}((q^2-1)(q+1)Y_\infty-2\mathtt{G})$
that has weight $q^2-q+1$. According to Propositions \ref{pro02112025A} and \ref{pro02112025B}, choose a Hermitian curve $H_t$ through $G$ other than $H_\tau$ together with $q-1$ more Hermitian curves $H_u$ such that they cover all points of $H_\tau\cap \Pi$ apart from a unique $\Gamma$-orbit $\Omega$. Let $F_t=F_t(X,Y)=0$, and $F_u=F_u(X,Y)=0$ be the equations of the chosen Hermitian curves. Set $F=\prod_u F_u$. Furthermore, choose  $\frac{1}{2}(q-2)(q+1)$ points in $\Omega$ distinct from $Y_\infty$, and let $Z=Z(X,Y)=0$ be the equation of a curve $\mathcal{C}$ of degree $q-2$ passing through the chosen points.  
 Let $\mathtt{R}=R_1+\ldots+ R_N$ with $N=q^3+1-2(q^2-q+1)$ and $R_i\in (H_\tau\cap\Pi)\setminus (\Omega\cup G)$ for $1\le i\le N$. Also, let $\mathtt{S}$ be the intersection divisor $H_\tau\circ C$. Now, define $f$ to be the function    
$$f=\frac{F_t^2FZ}{(\ell_1\ell_2\ell_3)^q}.$$
From \cite[Theorem 6.42]{HKT},  
$${\rm{div}}(f)=2\mathtt{G}+2q\mathtt{U}+\mathtt{R}+\mathtt{S}+(q-1)q\mathtt{U}+3q(q+1)Y_\infty-
\big((q+1)^3+(q-1)(q+1))Y_\infty+q(q+1)\mathtt{U}\big)$$
whence
$${\rm{div}}(f)=2\mathtt{G}-(q^2-1)(q+1)Y_\infty+\mathtt{R}+\mathtt{S}.$$ 
Therefore, $f\in \mathcal{L}({\mathtt{W}}+{\mathtt{D}}-{\mathtt{G}})$. 
Since the support of $\mathtt{S}$ contains at least $\frac{1}{2}(q-2)(q+1)$ points in $\Omega$ other than $Y_\infty$, $f$ has at least $q^3+1-2(q^2-q+1)+\frac{1}{2}(q-2)(q+1)$ zeros in $D$. Therefore, the weight of $f$ is at most $q^2-q+1-\frac{1}{2}(q-2)(q+1)=\frac{1}{2}(q^2-q)+2$. Thus, we proved the following result.
\begin{thm} 
\label{thm13112025}
    The Hermitian Singer differential code $C_{\Omega}(\mathtt D,\mathtt G)$ is a 
    $$[q(q^2-q+1),q^3-\frac{3}{2}q^2+\frac{3}{2}q-2,d]_{q^2}$$ linear code with $3\le d\leq \frac{1}{2}(q^2-q)+2$.
\end{thm}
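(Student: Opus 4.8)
The plan is to pass to the dual description of the differential code and treat it as a functional code, so that the dimension comes from Riemann-Roch and the minimum-distance bounds come from the designed distance (for the lower bound) and an explicit low-weight codeword (for the upper bound). By \cite[Theorem 12.17]{P} there is a canonical divisor $\mathtt{W}$ with $C_\Omega(\mathtt{D},\mathtt{G})\cong C_L(\mathtt{D},\mathtt{W}+\mathtt{D}-\mathtt{G})$, and the length is then the already-established value $n=q^3+1-(q^2-q+1)=q(q^2-q+1)$. The first task is to pin down $\mathtt{W}$ so that the evaluation condition \eqref{eq31072025} holds at every $P\in D$, and then to compute the class of $\mathtt{W}+\mathtt{D}-\mathtt{G}$ explicitly. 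For this I would work with $\mathcal{H}(q)$ in canonical form $y^q+y-x^{q+1}=0$, where ${\rm div}(dx)=(2\mathfrak{g}-2)Y_\infty$, and take $\mathtt{W}=\tfrac{F^2}{L}\,dx$ with $F=0$ a second Hermitian curve through the support of $\mathtt{G}$ (existing by Proposition~\ref{pro02112025A}) and $L$ the product of $q^2-q$ lines through a point $R$ external to $\mathcal{H}(q)$ together with the polar line of $R$.

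The geometric heart of the computation is the intersection data of $F$ with $\mathcal{H}(q)$ and with the fundamental triangle. By Lemmas~\ref{lem13112025} and~\ref{lem13112025B}, $F$ meets $\mathcal{H}(q)$ at the three fundamental points $U_1,U_2,U_3$ with multiplicity $q$ each (together with a $\Gamma$-orbit), while the cubic $\ell_1\ell_2\ell_3$ meets $\mathcal{H}(q)$ at each $U_i$ with multiplicity $q+1$. Combining the principal divisors of $F^2$, $L$, and $dx$ and using the relation $\mathtt{G}+q\mathtt{U}\equiv(q+1)^2Y_\infty$ (with $\mathtt{U}=U_1+U_2+U_3$), I would obtain
$$\mathtt{W}+\mathtt{D}-\mathtt{G}\equiv(q^3-q^2-5q-3)Y_\infty+2q\mathtt{U}\equiv(q^2-1)(q+1)Y_\infty-2\mathtt{G},$$
of degree $q^3-q^2+q-3$. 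Since this exceeds $2\mathfrak{g}-2=q^2-q-2$, the Riemann-Roch theorem \eqref{eq02112025} gives the dimension directly, $k=\deg(\mathtt{W}+\mathtt{D}-\mathtt{G})-\mathfrak{g}+1=q^3-\tfrac{3}{2}q^2+\tfrac{3}{2}q-2$, while the designed minimum distance is $\delta=n-\deg(\mathtt{W}+\mathtt{D}-\mathtt{G})=3$, yielding the lower bound $d\ge 3$.

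For the upper bound I would exhibit an explicit function in $\mathcal{L}\big((q^2-1)(q+1)Y_\infty-2\mathtt{G}\big)$ of small weight. Using Propositions~\ref{pro02112025A} and~\ref{pro02112025B}, I would choose a Hermitian curve $H_t$ through $G$ together with $q-1$ further curves $H_u$ whose union covers $H_\tau\cap\Pi$ except for one $\Gamma$-orbit $\Omega$; set $F=\prod_u F_u$ and select $\tfrac{1}{2}(q-2)(q+1)$ points of $\Omega$ through which a curve $\mathcal{C}\colon Z=0$ of degree $q-2$ passes. Setting $f=\dfrac{F_t^2\,F\,Z}{(\ell_1\ell_2\ell_3)^q}$ and computing ${\rm div}(f)$ by \cite[Theorem 6.42]{HKT} from the intersection multiplicities above, I would verify that ${\rm div}(f)=2\mathtt{G}-(q^2-1)(q+1)Y_\infty+\mathtt{R}+\mathtt{S}$ with $\mathtt{S}=H_\tau\circ\mathcal{C}$, so that $f$ lies in the required Riemann-Roch space; counting the zeros contributed by $\mathtt{R}$ and by the at least $\tfrac{1}{2}(q-2)(q+1)$ points of $\mathtt{S}$ lying in $\Omega$ then bounds the weight of $f$ by $\tfrac{1}{2}(q^2-q)+2$.

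The main obstacle is the divisor bookkeeping: choosing $\mathtt{W}$ so that \eqref{eq31072025} holds exactly rather than merely up to linear equivalence, and then tracking the intersection multiplicities at the fundamental triangle (the $2q\mathtt{U}$ term and the $(\ell_1\ell_2\ell_3)^q$ in the denominator) with enough care that ${\rm div}(f)$ lands precisely in $\mathcal{L}(\mathtt{W}+\mathtt{D}-\mathtt{G})$ and the zero count is exact. Pinning down the true value of $d$ within the interval $[3,\tfrac{1}{2}(q^2-q)+2]$ appears to require finer information about the configuration and is left open.
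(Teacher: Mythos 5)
Your proposal follows the paper's own proof essentially step for step: the same choice of canonical divisor $\mathtt{W}=\frac{F^2}{L}\,dx$, the same computation showing $\mathtt{W}+\mathtt{D}-\mathtt{G}\equiv(q^2-1)(q+1)Y_\infty-2\mathtt{G}$ of degree $q^3-q^2+q-3$, the same Riemann--Roch dimension count and designed distance $\delta=3$, and the identical low-weight function $f=F_t^2FZ/(\ell_1\ell_2\ell_3)^q$ for the upper bound. The approach is correct and matches the paper; the ``divisor bookkeeping'' you flag as the main obstacle is precisely what the paper carries out via Lemmas~\ref{lem13112025} and~\ref{lem13112025B} and \cite[Theorem 6.42]{HKT}.
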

\begin{rem}
\label{rem16112025} \rm{In the above construction of $f$, if the curve $\mathcal{C}$ happens to have at least $\frac{1}{2}(q-2)(q+1)+\kappa$ common points with $H(q)$ in $PG(2,q^2)$, then the bound on $d$ in Theorem \ref{thm13112025} is improved by $\kappa$ so that $3\le d \le \frac{1}{2}(q^2-q)+2-\kappa$. Unfortunately, finding such curves $\mathcal{C}$ appears to be a rather difficult problem, and no results about it are available in the literature yet.}
\end{rem}

\section{Computational results}\label{sec6}
In Section \ref{singergroup}, we have pointed out the usefulness of  a non-canonical representation of $PG(2,q^2)$ as a projective subplane $\Pi$ of $PG(2,q^6)$ when theoretical aspects of the Hermitian curve related with its Singer groups are investigated. However, this approach becomes a clear  disadvantage when computation, supported by software packages such as MAGMA \cite{magma}, is carried out over $\mathbb{F}_{q^6}$ instead of the much smaller $\mathbb{F}_{q^2}$. To work directly over $\mathbb{F}_{q^2}$, the Riemann-Roch space $\mathcal{L}(\mathtt{G})$ has to be redetermined on a Hermitian curve $H(q)$ in $PG(2,q^2)$ with respect to a Singer group $\Gamma$ of $PGU(3,q)$. This can be done by a change of the projective reference system. More precisely, starting off with $PG(2,q^2)$ equipped with a homogeneous coordinate system $(\bar{X}_1:\bar{X}_2:\bar{X}_0)$ over $\mathbb{F}_q$, regard $PG(2,q^6)$ as the cubic extension of $PG(2,q^2)$. Fix an element $a\in F_{q^3}\setminus \mathbb{F}_q$ such that $a^{q+1}+a+1=0$. Then $a^{q^2+1}+a^{q^2}+1=0$ and the determinant of the matrix 
\[M=\left(\begin{array}{ccc} 
a   & 1  & a^{q^2+1}\\
a^{q^2+1}  & a  & 1 \\
 1 & a^{q^2+1} & a
\end{array}\right)\]
does not vanish. Moreover, $M$ is orthogonal as $^tM=(1+a^2+a^{2(q^2+1)})M^{-1}$.  

The points   $A_1'(a:1:a^{q^2+1}),A_2'(a^{q^2+1}:a:1),A_0'(1:a^{q^2+1}:a)$ of $PG(2,q^6)$ are not collinear. Therefore, they may be taken together with the point $E=(1:1:1)$ as the vertices of a fundamental triangle with unity point $E$ of a projective coordinate system in $PG(2,q^6)$ with homogeneous coordinates $(X_1:X_2:X_0)$. It may be noticed that the Frobenius collineation fixes $E$ and that it preserves the triangle $A_1'A_2'A_0'$ and acts on it as the cycle $(A_1'A_0'A_2')$. Moreover, $^tM$ is the matrix that changes the coordinate system $(\bar{X}_1:\bar{X}_2:\bar{X}_0)$ to  $(X_1:X_2:X_0)$.  Therefore, a curve of homogeneous equation $U(X_1,X_2,X_0)=0$ has equation $\bar{U}(\bar{X}_1:\bar{X}_2:\bar{X}_0)$ where 
$$\bar{U}(\bar{X}_1:\bar{X}_2:\bar{X}_0)=U(a\bar{X}_1+\bar{X}_2+a^{q^2+1}\bar{X}_0,a^{q^2+1}\bar{X}_1+a\bar{X}_2+\bar{X}_0,\bar{X}_1+a^{q^2+1}\bar{X}_2+a\bar{X}_0).$$
\begin{rem}
    \label{rem16112025A}
\rm{The Hermitian curve $H_1$ of equation $X_1X_2^q+X_2X_0^q+X_0X_1^q=0$ in $PG(2,q^6)$ has equation $\bar{X}_1^{q+1}+\bar{X}_2^{q+1}+\bar{X}_0^{q+1}=0$ in $PG(2,q^2)$. Moreover, the generator $\omega$ given by the matrix $B$ in the coordinate system $(X_1:X_2:X_0)$ is represented by the symmetric matrix $^tMBM$ below in the coordinate system $(\bar{X}_1:\bar{X}_2:\bar{X}_0)$.} 
  \[\left(\begin{array}{ccc} 
\beta a^2+\beta^{q^2+1}a^{2(q^2+1)}+1 & a\beta+a^{q^2+2}\beta^{q^2+1}+a^{q^2+1}  & \beta a^{q^2+2}+a^{q^2+1}\beta^{q^2+1}+a\\
a\beta+a^{q^2+2}\beta^{q^2+1}+a^{q^2+1}  & \beta+\beta^{q^2+1}a^2+a^{2(q^2+1)}  &  \beta a^{q^2+1}+\beta^{q^2+1}a+a^{q^2+2}\\
\beta a^{q^2+2}+a^{q^2+1}\beta^{q^2+1}+a & \beta a^{q^2+1}+\beta^{q^2+1}a+a^{q^2+2} & a^{2(q^2+1)}\beta+\beta^{q^2+1}+a^2
\end{array}\right)\]
\end{rem}
\subsection{Case q=3} For this case, a Magma aided computation regarding Theorem \ref{thm13112025} shows that $d=5$, and hence the upper bound on $d$ in Theorem \ref{the08112025} is strict. 
\subsection{Case q=4} In our computation in $PG(2,16)$, we have used Remark \ref{rem16112025A}. Therefore, in this case, the Hermitian curve $H$ is taken with homogeneous equation   $X_1^{q+1}+X_2^{q+1}+X_0^{q+1}=0$, and 
the Singer group $\Gamma$ is generated by $^tMBM$ up to a constant factor, namely   
\[\left(\begin{array}{ccc} 
r^6  & r^{2}  & 1\\
r^{2}  & r^{14}  & r^{8} \\
 1 & r^{8} & r^3
\end{array}\right)\]
Then the $\Gamma$-orbit $\Omega$ of the point $P_0(0:1:r^3)$ consists of the following $13$ points of $H$:  
\begin{align*}
&P_0(0:1:r^3), P_1(1:r^4:r^8), P_2(1:r^3:0), P_3(1:0:r^3),
P_4(1:r^7:r^{11}),\\ 
&P_5(1:r^4:r^{11}), P_6(1:r:r^{14}), 
P_7(1:r^{13}:r^{14}), P_8(1:0:r^{12}), P_{9}(1:r^{12}:0),\\
&P_{10}(1:r:r^2), P_{11}(0:1:r^{12}), P_{12}(1:r^5:r^{10}). 
\end{align*}
A Magma aided exhaustive computation finds thirteen (irreducible) conics containing at least seven points in $\Omega$, but none of them contains eight points
from $\Omega$. One of these conics has homogeneous equation $X_1^2+r^3X_1X_2+X_1X_0+r^{11}X_2X_0+r^8 X_0^2$ and passes through the following seven points of $\Omega$: $P_0,P_1,P_3,P_4,P_9,P_{10},P_{12}$.  Therefore, the bound in Theorem \ref{the08112025} for $q=4$ can be refined into 
$\frac{1}{2}(16-4)+2-2=6$. Finally, a Magma aided computation regarding Theorem \ref{thm13112025} shows that $d=6$, and hence the refined upper bound on $d$ in Theorem \ref{the08112025} is strict.

\section*{Acknowledgements} G. Korchm\'aros, F. Romaniello  and V. Smaldore   have been partially supported by the Italian National Group for Algebraic and Geometric Structures and their Applications (GNSAGA - INdAM).

\end{document}